\title{Vanishing theorems for toric polyhedra}
\author{Osamu Fujino} 
\subjclass[2000]{Primary 14F17; Secondary 14F25.}
\date{2008/2/4}
\keywords{toric variety, Ishida's de Rham complex, vanishing theorem}
\address{Graduate School of Mathematics, Nagoya University, 
Chikusa-ku Nagoya 464-8602 Japan}
\email{fujino@math.nagoya-u.ac.jp}
\newcommand{\Supp}[0]{{\operatorname{Supp}}}
\newcommand{\Hom}[0]{{\operatorname{Hom}}}
\newcommand{\Sing}[0]{{\operatorname{Sing}}}
\newcommand{\Spec}[0]{{\operatorname{Spec}}}
\newtheorem{thm}{Theorem}[section]
\newtheorem{lem}[thm]{Lemma}
\newtheorem{cor}[thm]{Corollary}
\newtheorem{prop}[thm]{Proposition}
\newtheorem*{claim}{Claim}
\newtheorem{thm-a}{Theorem}[subsection]
\newtheorem{lem-a}[thm-a]{Lemma}
\newtheorem{cor-a}[thm-a]{Corollary}
\newtheorem{prop-a}[thm-a]{Proposition}
\theoremstyle{definition}
\newtheorem{defn}[thm]{Definition}
\newtheorem{rem}[thm]{Remark}
\newtheorem*{ack}{Acknowledgments}      
\newtheorem*{notation}{Notation}         
\newtheorem{say}[thm]{}
\newtheorem{ex-a}[thm-a]{Example}
\newtheorem{que-a}[thm-a]{Question}
\newtheorem{defn-a}[thm-a]{Definition}
\newtheorem{rem-a}[thm-a]{Remark}
\newtheorem{say-a}[thm-a]{}
\begin{document}
\bibliographystyle{amsalpha+}

\maketitle

\begin{abstract} 
A toric polyhedron is a reduced closed 
subscheme of a toric variety 
that are partial unions of the orbits 
of the torus action. 
We prove vanishing theorems for toric polyhedra. 
We also give a proof of the $E_1$-degeneration 
of Hodge to de Rham type spectral sequence 
for toric polyhedra in any characteristic. 
Finally, we give a very powerful extension theorem for 
ample line bundles. 
\end{abstract}

\tableofcontents

\section{Introduction}\label{sec1} 
In this paper, we treat vanishing theorems for {\em{toric polyhedra}}. 
Section \ref{sec2} is a continuation of my paper \cite{fujino1}, 
where we gave a very simple, characteristic-free approach 
to vanishing theorems on toric varieties by using 
multiplication maps. 
Here, we give a generalization of Danilov's 
vanishing theorem on toric polyhedra. 

\begin{thm}[Vanishing Theorem]\label{00}
Let $Y=Y(\Phi)$ be a projective toric 
polyhedron defined over a field $k$ of arbitrary characteristic. Then 
$$
H^i(Y, \widetilde \Omega^a_Y\otimes L)=0 \ \ \text{for} \ \ i\ne 0
$$ 
holds for every ample line bundle $L$ on $Y$. 
\end{thm}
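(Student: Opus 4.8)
The plan is to transplant to toric polyhedra the multiplication‑map argument of \cite{fujino1}. Fix an integer $d\geq 2$ and let $\phi=\phi_{d}\colon Y\to Y$ be the morphism induced by multiplication by $d$ on the lattice $N$: since multiplication by $d$ is a map of fans sending each cone into itself, it induces a finite endomorphism of the ambient toric variety that preserves every torus orbit, and this restricts to a finite endomorphism of $Y=Y(\Phi)$. Two features of $\phi_{d}$ will be used throughout. First, $\phi_{d}$ is affine, so $R^{j}(\phi_{d})_{*}=0$ for $j>0$ and hence $H^{i}(Y,-)=H^{i}(Y,(\phi_{d})_{*}-)$ on the relevant sheaves. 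Second, $\phi_{d}^{*}L\cong L^{\otimes d}$ for every line bundle $L$ on $Y$; this is the standard toric computation on support functions, which persists on toric polyhedra because line bundles on $Y$ are again $T$‑linearizable.

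The key point is that the $T$‑action gives $(\phi_{d})_{*}\widetilde{\Omega}^{a}_{Y}$ a grading by $M/dM$ (with $M$ the dual lattice of $N$) whose degree‑zero part is $\widetilde{\Omega}^{a}_{Y}$; in particular $\widetilde{\Omega}^{a}_{Y}$ is a direct summand of $(\phi_{d})_{*}\widetilde{\Omega}^{a}_{Y}$. Granting this and tensoring with an ample line bundle $L$ (permissible with the projection formula since $L$ is invertible), and using $\phi_{d}^{*}L\cong L^{\otimes d}$, we obtain that $\widetilde{\Omega}^{a}_{Y}\otimes L$ is a direct summand of
\[
(\phi_{d})_{*}\widetilde{\Omega}^{a}_{Y}\otimes L\ \cong\ (\phi_{d})_{*}\bigl(\widetilde{\Omega}^{a}_{Y}\otimes\phi_{d}^{*}L\bigr)\ \cong\ (\phi_{d})_{*}\bigl(\widetilde{\Omega}^{a}_{Y}\otimes L^{\otimes d}\bigr).
\]
Taking cohomology and using $H^{i}(Y,(\phi_{d})_{*}-)=H^{i}(Y,-)$, we conclude that $H^{i}(Y,\widetilde{\Omega}^{a}_{Y}\otimes L)$ is a direct summand of $H^{i}(Y,\widetilde{\Omega}^{a}_{Y}\otimes L^{\otimes d})$ for every $d\geq 1$.

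It remains to let $d\to\infty$. Since $Y$ is projective and $L$ is ample, Serre vanishing yields $H^{i}(Y,\widetilde{\Omega}^{a}_{Y}\otimes L^{\otimes d})=0$ for all $i>0$ once $d\gg 0$, whence $H^{i}(Y,\widetilde{\Omega}^{a}_{Y}\otimes L)=0$ for $i>0$; for $i<0$ there is nothing to prove, so the theorem follows. Note that the characteristic of $k$ never enters: the multiplication maps, the grading decomposition, and Serre vanishing all hold verbatim in any characteristic, which is exactly what makes this route preferable to a Hodge‑theoretic one.

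I expect the one nontrivial ingredient to be the grading statement for $\widetilde{\Omega}^{a}_{Y}$. On a single orbit $\widetilde{\Omega}^{a}_{Y}$ is locally free and the pushforward of a structure sheaf visibly splits off its degree‑zero summand — the toric analogue of Frobenius splitting, valid in all characteristics — so the real work is to check that these local decompositions are compatible with the closed immersions and residue maps along the orbit stratification that assemble Ishida's de Rham complex $\widetilde{\Omega}^{\bullet}_{Y}$; equivalently, that $(\phi_{d})_{*}$ respects the whole complex $\widetilde{\Omega}^{\bullet}_{Y}$ degree by degree. This combinatorial bookkeeping, rather than anything in the cohomological argument, is the main obstacle.
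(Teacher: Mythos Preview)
Your overall strategy---split injection for $\widetilde{\Omega}^a_Y$ under the multiplication map, projection formula, Serre vanishing---is exactly the paper's. The genuine gap is the assertion that $\phi_d^{*}L\cong L^{\otimes d}$ for \emph{every} line bundle $L$ on a toric polyhedron $Y$. This is true on a toric \emph{variety}, where $\mathrm{Pic}$ is generated by torus-invariant divisors, but it fails on toric polyhedra; the paper flags this in Remark~\ref{24}, noting that line bundles on $Y$ need not be describable by combinatorial data and need not extend to $X$. Concretely, take $Y$ to be the cycle of three $\mathbb{P}^1$'s in $\mathbb{P}^2$ (Example~\ref{23}). Then $\mathrm{Pic}^{0}(Y)\cong k^{*}$ via the product of gluing scalars at the nodes. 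Since $\phi_d$ fixes each node and pulls back constants to themselves, $\phi_d^{*}$ acts as the \emph{identity} on $\mathrm{Pic}^{0}(Y)$, whereas $L\mapsto L^{\otimes d}$ acts as the $d$-th power map on $k^{*}$; these disagree whenever the class of $L$ is not a $(d-1)$-st root of unity, and twisting by $\mathcal{O}_{\mathbb{P}^2}(1)|_Y$ produces ample counterexamples. Your appeal to $T$-linearizability does not rescue this: even if every line bundle on $Y$ admits a $T$-linearization, the weights at the fixed points do not determine the bundle (the $\mathrm{Pic}^{0}$ part is invisible to them), so the support-function calculation does not carry over.

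This is precisely why the paper does \emph{not} argue uniformly over all characteristics. It takes $l=p$ in characteristic $p>0$: since $Y$ is defined over $\mathbb{F}_p$, the $p$-times multiplication map is the relative Frobenius $F_{Y/k}$, and then $F^{*}L'\cong L^{\otimes p}$ holds for \emph{every} line bundle by the general theory of Frobenius (via $F_{\mathrm{abs}}^{*}L\cong L^{\otimes p}$), with no combinatorial input needed. Characteristic zero is then obtained by spreading out and reducing modulo $p$. So the restriction to $l=p$ and the detour through positive characteristic are not optional conveniences---they are what makes the pullback step go through for an arbitrary ample $L$ on $Y$.
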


Note that a toric polyhedron is a reduced closed subscheme 
of a toric variety that are partial 
unions of the orbits of the torus action. 
Once we understand Ishida's de Rham complexes 
on toric polyhedra, then we can easily see that 
the arguments in \cite{fujino1} works for toric 
polyhedra with only small modifications. 
Moreover, we give a proof of the $E_1$-degeneration 
of Hodge to de Rham type spectral sequence 
for toric polyhedra. 

\begin{thm}[$E_1$-degeneration]\label{01} 
Let $Y=Y(\Phi)$ be a complete 
toric polyhedron defined over a field $k$ of any characteristic. 
Then the spectral sequence 
$$
E^{a,b}_1=H^b(Y, \widetilde \Omega^a_Y)
\Rightarrow \mathbb H^{a+b} 
(Y, \widetilde \Omega ^{\bullet}_Y)
$$
degenerates at the $E_1$-term. 
\end{thm}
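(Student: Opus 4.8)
The plan is to adapt Danilov's characteristic-free argument for complete toric varieties, carried out now on the polyhedral complex $\Phi$ by means of Ishida's de Rham complex on $Y$. First I would reduce the assertion to the numerical identity
\[
\sum_{a+b=n}\dim_k H^b(Y,\widetilde\Omega^a_Y)=\dim_k\mathbb H^n(Y,\widetilde\Omega^{\bullet}_Y)\qquad\text{for all }n.
\]
Since $Y$ is complete the abutment is finite dimensional and the complex is bounded, and $E^{a,b}_\infty$ is a subquotient of $E^{a,b}_1$, so one always has $\ge$; equality for every $n$ is equivalent to the vanishing of all differentials $d_r$, $r\ge 1$, i.e.\ to degeneration at $E_1$. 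This step is pure linear algebra and is insensitive to the characteristic.

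Next I would use the action of the big torus $T=\Spec k[M]$ on $Y$ and on $\widetilde\Omega^{\bullet}_Y$: every sheaf, every cohomology group, and the whole spectral sequence split into weight pieces indexed by $m\in M$, so it suffices to prove the identity weight by weight. For $m=0$ the de Rham differential $d\colon\widetilde\Omega^a_Y(0)\to\widetilde\Omega^{a+1}_Y(0)$ is identically zero: on the torus $d$ kills the constant logarithmic forms $d\log x^{m_1}\wedge\cdots\wedge d\log x^{m_a}$, and since Ishida's complex in weight $0$ is assembled from such ``constant'' forms along every stratum of $\Phi$, the vanishing propagates over all of $Y$. Hence $\widetilde\Omega^{\bullet}_Y(0)\simeq\bigoplus_a\widetilde\Omega^a_Y(0)[-a]$ as a complex of sheaves, the hypercohomology spectral sequence of the weight-$0$ part degenerates at $E_1$ for formal reasons, and $\mathbb H^n(Y,\widetilde\Omega^{\bullet}_Y(0))=\bigoplus_{a+b=n}H^b(Y,\widetilde\Omega^a_Y(0))$.

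The theorem therefore comes down to showing that for a \emph{complete} $Y$ the groups $H^b(Y,\widetilde\Omega^a_Y)$ are concentrated in weight $0$, i.e.\ $H^b(Y,\widetilde\Omega^a_Y(m))=0$ for all $m\ne 0$; granting this, $E^{a,b}_1=E^{a,b}_1(0)$, which together with the previous paragraph forces $E_1$-degeneration. I would establish the concentration by computing $H^b(Y,\widetilde\Omega^a_Y(m))$ with the \v{C}ech complex attached to the natural affine cover of $Y$: in each fixed weight $m$ this becomes a finite complex of $k$-vector spaces built combinatorially from $m$ and the faces of $\Phi$. For $m\ne 0$ its local terms are Koszul-type complexes for the nonzero vector $m$ restricted to the relevant quotient lattices, which already yields the toric Poincar\'e lemma that $\widetilde\Omega^{\bullet}_Y(m)$ is exact (hence $\mathbb H^{\bullet}(Y,\widetilde\Omega^{\bullet}_Y(m))=0$); the finer point is that the assembled combinatorial complex is acyclic exactly because $Y$ is complete --- morally it computes a relative cohomology group of the support of $\Phi$ that vanishes precisely under the completeness condition on $\Phi$.

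The main obstacle is this last combinatorial input. A toric polyhedron is in general non-normal, reducible, and not equidimensional, so Ishida's de Rham complex on $\Phi$ is genuinely more intricate than Danilov's complex on a single complete fan; one must set up the weight decomposition of $\widetilde\Omega^{\bullet}_Y$ over the poset of cones making up $\Phi$ and redo Danilov's acyclicity computation in this setting, and it is here that completeness is indispensable (the statement already fails on a single affine chart, where $x^m$ contributes in nonzero weight). Once the behaviour of Ishida's de Rham complex on $\Phi$ is understood --- as promised in the introduction --- the rest of the argument is formal.
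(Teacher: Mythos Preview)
Your outline follows Danilov's original weight-decomposition argument: split everything by the $M$-grading, note that the weight-$0$ differential vanishes, and then appeal to a combinatorial vanishing $H^b(Y,\widetilde\Omega^a_Y(m))=0$ for $m\ne 0$ when $Y$ is complete. The strategy is sound, but as you yourself flag, the last step is the whole difficulty, and you have not carried it out; for a toric polyhedron (reducible, possibly non-equidimensional) redoing Danilov's acyclicity computation over Ishida's complex on the poset $\Phi$ is genuine work, not a formality.

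The paper bypasses this combinatorial input entirely by a different mechanism. In characteristic $p>0$ the $p$-times multiplication map $F:Y\to Y'$ yields split injections $\widetilde\Omega^a_{Y'}\hookrightarrow F_*\widetilde\Omega^a_Y$ (Proposition~\ref{135}); because $l=p$, these are compatible with $d$ and assemble into morphisms of complexes $\phi:\bigoplus_a\widetilde\Omega^a_{Y'}[-a]\to F_*\widetilde\Omega^{\bullet}_Y$ and $\psi$ in the other direction with $\psi\circ\phi$ a quasi-isomorphism (Proposition~\ref{14}). This gives at once
\[
\dim_k\mathbb H^n(Y,\widetilde\Omega^{\bullet}_Y)=\dim_k\mathbb H^n(Y',F_*\widetilde\Omega^{\bullet}_Y)\ \ge\ \sum_{a+b=n}\dim_k H^b(Y',\widetilde\Omega^a_{Y'})=\sum_{a+b=n}\dim_k E_1^{a,b},
\]
and together with the trivial reverse inequality forces $E_1$-degeneration. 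Characteristic zero is then obtained by spreading $Y$ out over $\Spec\mathbb Z$ and invoking base change from a general closed fibre. Thus the paper trades your hard acyclicity lemma for a two-line Frobenius-style dimension count; the price is the indirect passage to characteristic zero, whereas your approach, once the combinatorial vanishing is actually proved, would be uniform in the characteristic from the start.
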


It seems to be new when 
the characteristic of the base field is positive.  
So, Section \ref{sec2} supplements \cite{btlm}, 
\cite{danilov}, and \cite{ishida}. 
In Section \ref{sec3}, we will give the following two 
results supplementary 
to \cite{fujino1}.  

\begin{thm}[{cf.~\cite[Theorem 1.1]{fujino1}}]\label{b-I}
Let $X$ be a toric variety defined over 
a field $k$ of any characteristic and let $A$ and $B$ be reduced torus 
invariant Weil divisors on $X$ without common irreducible 
components. 
Let $L$ be a line bundle on $X$. If 
$H^i(X, \widetilde {\Omega}^a_{X}(\log(A+B))(-A)\otimes L^
{\otimes l})=0$ for 
some positive integer $l$, 
then $H^i(X, \widetilde {\Omega}^a_{X}(\log (A+B))(-A)\otimes L)=0$. 
\end{thm}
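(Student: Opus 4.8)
The plan is to run the ``multiplication map'' argument of \cite[Theorem 1.1]{fujino1} with the reflexive sheaf $\widetilde{\Omega}^a_X(\log(A+B))(-A)$ in place of $\widetilde{\Omega}^a_X(\log D)$; the point is only that the extra decorations $\log(A+B)$ and $(-A)$ do not disturb that argument. Write $X=X(\Delta)$ with $\Delta$ a fan in $N_{\mathbb{R}}$ and $M=\Hom(N,\mathbb{Z})$, set $\mathcal{F}=\widetilde{\Omega}^a_X(\log(A+B))(-A)$, and let $\rho=\rho_l\colon X\to X$ be the finite surjective toric morphism induced by the multiplication-by-$l$ map $N\to N$ (the fan is unchanged). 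First I would collect the standard facts: (i) $\rho$ is finite, hence $R^{>0}\rho_*=0$ and $H^i(X,\mathcal{G})=H^i(X,\rho_*\mathcal{G})$ for every coherent $\mathcal{G}$; (ii) $\rho^*D_v=lD_v$ for every torus invariant prime divisor $D_v$, so $\rho^*L\cong L^{\otimes l}$ after replacing $L$ by a torus invariant representative, and hence by the projection formula $\rho_*\bigl(\mathcal{F}\otimes L^{\otimes l}\bigr)\cong\bigl(\rho_*\mathcal{F}\bigr)\otimes L$; (iii) over each affine chart $\Spec k[\sigma^{\vee}\cap M]$ the coordinate ring acts on $\rho_*\mathcal{F}$ only through the sublattice $lM\subseteq M$, which yields a canonical decomposition $\rho_*\mathcal{F}=\bigoplus_{\bar m\in M/lM}(\rho_*\mathcal{F})_{\bar m}$ into coherent subsheaves (equivalently, $\rho$ is the quotient by the diagonalizable subgroup scheme $\Hom(M/lM,\mathbb{G}_m)$ of the torus, whose pushforwards split into isotypic components in every characteristic).

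The main step is the identification of the trivial summand,
\[
(\rho_*\mathcal{F})_{\bar 0}\;\cong\;\widetilde{\Omega}^a_X(\log(A+B))(-A)\;=\;\mathcal{F}.
\]
For this I would invoke the explicit $M$-graded description of Danilov's sheaves: over $U_\sigma$ one has $\Gamma(U_\sigma,\mathcal{F})=\bigoplus_{m\in M}\chi^m\otimes W_\sigma(m)$, where $W_\sigma(m)\subseteq\wedge^a M_{k}$ is a subspace determined by the incidence data $\{\,v\in\sigma(1):\langle m,v\rangle=0\,\}$ and the signs $\{\,v\in\sigma(1):\langle m,v\rangle>0\,\}$ together with the fixed partition of the rays into those belonging to $A$, to $B$, or to neither, the structure maps $\chi^{m'}\cdot$ being the induced inclusions $W_\sigma(m)\hookrightarrow W_\sigma(m+m')$. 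Since $l$ is a positive integer, $m$ and $lm$ have the same incidence and sign data with respect to $\sigma$, so $W_\sigma(lm)=W_\sigma(m)$, and $lm$ is an admissible degree exactly when $m$ is. Hence the submodule $(\rho_*\mathcal{F})_{\bar 0}=\bigoplus_{m\in lM}\chi^m\otimes W_\sigma(m)$, regraded by $\chi^{lm'}\leftrightarrow\chi^{m'}$, is canonically $\Gamma(U_\sigma,\mathcal{F})$ again, compatibly with the module structure and with restriction to subcones; gluing over $\Delta$ gives the displayed isomorphism. This uses only the $M$-graded combinatorics of these reflexive sheaves and never differentiates anything, so it holds over a field of any characteristic; in particular the fact that a naive pullback of forms is killed when $l$ is divisible by $\operatorname{char}k$ plays no role.

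Combining (i), (ii) and the displayed isomorphism,
\[
H^i\bigl(X,\mathcal{F}\otimes L^{\otimes l}\bigr)=H^i\bigl(X,(\rho_*\mathcal{F})\otimes L\bigr)=\bigoplus_{\bar m\in M/lM}H^i\bigl(X,(\rho_*\mathcal{F})_{\bar m}\otimes L\bigr),
\]
and the $\bar m=\bar 0$ summand is $H^i(X,\mathcal{F}\otimes L)$. Thus the hypothesis $H^i(X,\mathcal{F}\otimes L^{\otimes l})=0$ forces $H^i(X,\mathcal{F}\otimes L)=0$, which is the assertion.

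The step I expect to require the most care is the identification $(\rho_*\mathcal{F})_{\bar 0}\cong\mathcal{F}$: one has to be sure the $M$-graded description of $\widetilde{\Omega}^a_X(\log(A+B))(-A)$ is available for the possibly non-Cartier Weil divisors $A$ and $B$, that the regrading by $lM$ is compatible with the algebra structure on each chart and with the gluing data of the fan, and that everything is genuinely insensitive to the characteristic. All of this is essentially contained in \cite{fujino1} and in the theory of Danilov's and Ishida's de Rham complexes; here it amounts to checking that adding $\log(A+B)$ and twisting by $-A$ changes nothing.
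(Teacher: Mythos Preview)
Your proposal is correct and follows essentially the same route as the paper: the paper records the split injection $\widetilde{\Omega}^a_{X'}(\log(A'+B'))(-A')\hookrightarrow F_*\widetilde{\Omega}^a_X(\log(A+B))(-A)$ in Proposition~\ref{a-1} (built from the explicit $M$-graded splitting on the torus in \ref{o23}) and then declares the proof ``obvious,'' the implicit argument being exactly your chain $H^i(X,\mathcal{F}\otimes L)\hookrightarrow H^i(X,\mathcal{F}\otimes L^{\otimes l})$ via $F^*L'\simeq L^{\otimes l}$. Your $M/lM$-isotypic decomposition of $\rho_*\mathcal{F}$ with $(\rho_*\mathcal{F})_{\bar 0}\cong\mathcal{F}$ is precisely how the paper constructs that splitting, so you have simply written out the details the paper omits.
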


It is a slight generalization of \cite[Theorem 1.1]{fujino1}. 

\begin{thm}\label{b-II} 
Let $X$ be a complete 
toric variety defined over a field $k$ of any 
characteristic and let $A$ and $B$ be reduced torus 
invariant Weil divisors on $X$ without common irreducible 
components. 
Then the spectral sequence 
$$
E^{a,b}_1=H^b(X, \widetilde \Omega^a_{X}(\log (A+B))(-A))
\Rightarrow \mathbb H^{a+b}(X, \widetilde \Omega^{\bullet}
_X(\log (A+B))(-A)) 
$$ 
degenerates at the $E_1$-term. 
\end{thm}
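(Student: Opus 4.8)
The plan is to reduce the $E_1$-degeneration to an explicit vanishing of Hodge cohomology, following the classical Danilov--Ishida strategy: whenever one has a bounded complex $\widetilde\Omega^{\bullet}$ on a complete variety whose hypercohomology and whose individual cohomologies have matching Euler characteristics, the spectral sequence degenerates at $E_1$ for dimension reasons. Concretely, the spectral sequence
$$
E^{a,b}_1=H^b(X, \widetilde \Omega^a_{X}(\log (A+B))(-A))\Rightarrow \mathbb H^{a+b}(X, \widetilde \Omega^{\bullet}_X(\log (A+B))(-A))
$$
degenerates at $E_1$ if and only if
$$
\sum_{a+b=n}\dim_k H^b(X,\widetilde\Omega^a_X(\log(A+B))(-A)) = \dim_k \mathbb H^n(X,\widetilde\Omega^{\bullet}_X(\log(A+B))(-A))
$$
for every $n$, since the $E_1$-terms can only shrink under the differentials. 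So first I would set up this numerical criterion and isolate exactly what must be proved.

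Second, I would identify the hypercohomology on the right-hand side. The complex $\widetilde\Omega^{\bullet}_X(\log(A+B))(-A)$ is the Ishida-type de Rham complex with log poles along $A+B$ twisted down by $A$; on a complete toric variety its hypercohomology can be computed from the combinatorics of the fan together with the data of which rays lie in $A$ and which in $B$. I expect $\mathbb H^n$ to be expressible via a fan-cohomology or reduced-homology computation on the support $|A|$ (or its complement), analogous to the way $\mathbb H^n(X,\widetilde\Omega^{\bullet}_X)$ for a complete toric $X$ recovers the cohomology of the underlying real torus / of the fan. The key auxiliary input here is the $E_1$-degeneration for toric polyhedra already proved, namely Theorem \ref{01}: the pair $(X,A)$ should be handled by relating $\widetilde\Omega^{\bullet}_X(\log(A+B))(-A)$ to the de Rham complex of the toric polyhedron $Y=|A|$ and of suitable faces, via a residue/adjunction exact sequence of complexes. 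Running the long exact hypercohomology sequences attached to the stratification by the components of $A$ lets me compute $\dim_k\mathbb H^n$ inductively.

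Third, I would compute the left-hand side. Each $\widetilde\Omega^a_X(\log(A+B))(-A)$ decomposes, by torus-equivariance, into a direct sum over the characters $m\in M$ of sheaves whose cohomology is governed by local combinatorial complexes on the fan; summing the $H^b$ over all $a$ and all $m$ in a fixed total degree $n$ and comparing with the hypercohomology count is then a purely combinatorial identity. I would prove it by the same residue exact sequences used for the right-hand side, so that both sides satisfy the same recursion with the same base case (the case $A=0$, which is Theorem \ref{b-II} of \cite{fujino1}-type degeneration — already known — or can be bootstrapped from Theorem \ref{01} applied to $X$ itself and to $B$). The main obstacle I anticipate is precisely the bookkeeping in this inductive step: making the residue sequences
$$
0\to \widetilde\Omega^{\bullet}_X(\log(A'+B))(-A')\to \widetilde\Omega^{\bullet}_X(\log(A+B))(-A)\to \widetilde\Omega^{\bullet-1}_{D}(\log(\cdots))(-\cdots)\to 0
$$
(where $A=A'+D$ for an irreducible component $D$) compatible with the $E_1$-filtrations on all three terms simultaneously, so that $E_1$-degeneration for the two outer terms forces it for the middle one. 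This requires checking that the connecting maps are strictly compatible with the stupid filtration — equivalently, that the relevant $d_1$ differentials vanish — which is where the characteristic-free multiplication-map argument from \cite{fujino1} and the Vanishing Theorem \ref{00} (applied on $D$, which is again a toric polyhedron) have to be invoked to kill the potentially obstructing cohomology groups. Once that compatibility is in place, the three-lemma for filtered complexes finishes the induction.
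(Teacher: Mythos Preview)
Your approach diverges substantially from the paper's, and the step you flag as ``the main obstacle'' is in fact a real gap. The paper does not use any induction on the components of $A$, any residue sequences, or any combinatorial identification of either side of the dimension equality. Instead it argues exactly as in the proof of Theorem \ref{15}: in characteristic $p>0$, the $p$-times multiplication map $F:X\to X'$ furnishes (Proposition \ref{a-2}) morphisms of complexes $\phi,\psi$ between $\bigoplus_{a}\widetilde\Omega^a_{X'}(\log(A'+B'))(-A')[-a]$ (with zero differential) and $F_*\widetilde\Omega^{\bullet}_X(\log(A+B))(-A)$ with $\psi\circ\phi$ a quasi-isomorphism. Taking hypercohomology immediately yields
\[
\sum_{a+b=n}\dim_k E_\infty^{a,b}\;=\;\dim_k\mathbb H^n\bigl(X,\widetilde\Omega^{\bullet}_X(\log(A+B))(-A)\bigr)\;\geq\;\sum_{a+b=n}\dim_k E_1^{a,b},
\]
hence equality, hence degeneration. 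Characteristic zero follows by spreading out over $\mathbb Z$, again as in Theorem \ref{15}. That is the entire argument.

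Your residue/induction route runs into trouble precisely where you anticipate: $E_1$-degeneration is \emph{not} stable under extensions of filtered complexes without strictness, and strictness of the connecting maps is essentially equivalent to what you want to prove. Your suggestion to ``kill the potentially obstructing cohomology groups'' via Theorem \ref{00} cannot work as stated, because Theorem \ref{00} concerns $\widetilde\Omega^a_Y\otimes L$ for an \emph{ample} $L$, whereas the degeneration statement carries no ample twist; the groups $H^b(D,\widetilde\Omega^a_D(\cdots))$ that arise in your long exact sequences are typically nonzero in several degrees. If instead you intend to feed the multiplication map directly into the connecting-map analysis, you are effectively re-deriving Proposition \ref{a-2} by a circuitous route; at that point it is far cleaner to apply the splitting globally to the whole complex, as the paper does, rather than thread it through an induction.
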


One of the main results of this paper is the 
next theorem, which is a complete generalization of 
\cite[Theorem 5.1]{mustata}. 
For the precise statement, see Theorem \ref{531} below. 
We will give a proof of Theorem \ref{ext-thm} 
as an application of our new vanishing arguments 
in Section \ref{sec-kol}. 
The technique in Section \ref{sec-kol} 
is very powerful and produces Koll\'ar type 
vanishing theorem in the toric category, 
which is missing in \cite{fujino1}. 

\begin{thm}[Extension Theorem]\label{ext-thm} 
Let $X$ be a projective 
toric variety defined over a field $k$ of any characteristic and 
let $L$ be an ample line bundle on $X$. 
Let $Y$ be a toric polyhedron on $X$ and 
let $\mathcal I_Y$ be the defining ideal sheaf 
of $Y$ on $X$. 
Then $H^i(X, \mathcal I_Y\otimes L)=0$ 
for any $i>0$. 
In particular, the restriction map $H^0(X, L)\to 
H^0(Y, L)$ is surjective. 
\end{thm}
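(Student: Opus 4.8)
The plan is to deduce the Extension Theorem from the Vanishing Theorem (Theorem~\ref{00}) together with the $E_1$-degeneration (Theorem~\ref{01}), by running the inductive ``cutting down'' argument that proves Kodaira--Nakano type statements via Hodge theory. First I would set up the short exact sequence of sheaves on $X$,
\begin{equation*}
0\to \mathcal I_Y\otimes L\to L\to \mathcal O_Y\otimes L\to 0,
\end{equation*}
so that the vanishing $H^i(X,\mathcal I_Y\otimes L)=0$ for $i>0$ (and hence the surjectivity of $H^0(X,L)\to H^0(Y,L)$) follows once one knows $H^i(X,L)=0$ for $i>0$ (which is the toric Kodaira vanishing already available from \cite{fujino1}, the case $Y=\varnothing$) and $H^i(Y,\mathcal O_Y\otimes L)=0$ for $i>0$, using the long exact cohomology sequence. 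So the crux is reduced to proving Kodaira vanishing \emph{on the toric polyhedron} $Y$ itself: $H^i(Y,L)=0$ for $i>0$ and $L$ ample.

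The key steps, in order, would be: (1) realize $\mathcal O_Y$ as $\widetilde\Omega^0_Y$, the degree-zero term of Ishida's de Rham complex $\widetilde\Omega^\bullet_Y$, which by Theorem~\ref{00} satisfies $H^i(Y,\widetilde\Omega^a_Y\otimes L)=0$ for all $i\neq 0$ and all $a$; (2) consider the hypercohomology spectral sequence for $\widetilde\Omega^\bullet_Y\otimes L$ and argue that, because $L$ is a line bundle, twisting the complex by $L$ shifts degrees but does not change the underlying combinatorics, so the twisted complex is again (quasi-isomorphic to) something whose hypercohomology we can control; (3) use the $E_1$-degeneration of Theorem~\ref{01} in the untwisted case to identify the ``Euler characteristic'' pattern, then combine with the concentration of $H^\bullet(Y,\widetilde\Omega^a_Y\otimes L)$ in degree $0$ from step (1) to conclude that $\mathbb H^m(Y,\widetilde\Omega^\bullet_Y\otimes L)$ is concentrated in degree $0$ as well; (4) finally, run the standard induction on $\dim Y$ (or on the number of maximal cones/orbits), cutting $Y$ by a general member of $|L|$ — which is again a toric polyhedron of one lower dimension by Theorem~\ref{ext-thm} applied inductively, or more honestly by a direct combinatorial description — to peel off the top cohomology and push the vanishing down to $H^i(Y,\mathcal O_Y\otimes L)=0$.

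Alternatively, and perhaps more cleanly, I would mimic the multiplication-map argument of \cite{fujino1} directly on $Y$: for $L$ ample the $l$-th power map $F_l\colon X\to X$ (or the ``$l$-times'' endomorphism of the torus, in characteristic zero replaced by the finite toric morphism of degree $l^{\dim X}$) pulls $L$ back to $L^{\otimes l}$ and one gets $H^i(Y,\mathcal O_Y\otimes L)$ as a direct summand of $H^i(Y,\mathcal O_Y\otimes L^{\otimes l})$, compatibly with the de Rham complex; letting $l\to\infty$ and comparing with Serre vanishing forces $H^i(Y,\mathcal O_Y\otimes L)=0$ for $i>0$. This is exactly the ``Koll\'ar type'' technique the introduction promises in Section~\ref{sec-kol}, and it has the advantage of being characteristic-free and of not requiring the cutting-down induction.

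The main obstacle I anticipate is step (2)/(3): controlling the \emph{twisted} de Rham hypercohomology $\mathbb H^\bullet(Y,\widetilde\Omega^\bullet_Y\otimes L)$. The $E_1$-degeneration (Theorem~\ref{01}) is stated only for the untwisted complex, and tensoring Ishida's de Rham complex with an ample $L$ destroys the closedness/exactness properties that make the untwisted complex behave like a genuine de Rham complex, so one cannot simply quote degeneration after twisting. One must either (a) show that for $L$ very ample the twisted complex $\widetilde\Omega^\bullet_Y\otimes L$ is still formal/degenerate by a local analysis on each affine toric chart (this is where Ishida's explicit combinatorial resolution of $\widetilde\Omega^a_Y$ in terms of cones does the work), or (b) bypass hypercohomology entirely and use the multiplication-map summand trick, which only needs the functoriality of $\widetilde\Omega^\bullet_Y$ under toric morphisms plus Theorem~\ref{00}. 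I would pursue route (b) as the primary argument and keep (a) as a fallback, since the paper explicitly advertises the multiplication-map/Koll\'ar technique of Section~\ref{sec-kol} as the engine behind Theorem~\ref{ext-thm}.
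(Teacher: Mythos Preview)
Your reduction via the short exact sequence
\[
0\to \mathcal I_Y\otimes L\to L\to L|_Y\to 0
\]
is circular at $i=1$. From $H^i(X,L)=0$ and $H^i(Y,L|_Y)=0$ for $i>0$ the long exact sequence gives $H^i(X,\mathcal I_Y\otimes L)=0$ only for $i\geq 2$; at $i=1$ it reads
\[
H^0(X,L)\to H^0(Y,L|_Y)\to H^1(X,\mathcal I_Y\otimes L)\to 0,
\]
so $H^1(X,\mathcal I_Y\otimes L)$ is exactly the cokernel of the restriction map. That surjectivity is the content of the theorem, not an input. (Incidentally, $H^i(Y,L|_Y)=0$ is immediate from Theorem~\ref{00} with $a=0$ --- this is Corollary~\ref{13} --- so your steps (2)--(4) are unnecessary; and a general member of $|L|$ is not a toric polyhedron, so the cutting-down induction would not stay in the toric category anyway.)

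The paper attacks $\mathcal I_Y$ directly rather than through the quotient sequence. One takes a toric resolution $f:V\to X$ with $K_V+E=f^*(K_X+D)$ and $\Supp f^{-1}(Y)$ simple normal crossing, writes $E=E_1+E_2$ with $E_1=\Supp f^{-1}(Y)$, and checks that $\mathcal I_Y\simeq f_*\mathcal O_V(-E_1)$. Since $K_X+D\sim 0$, one has $-E_1\sim K_V+E_2$, hence $\mathcal I_Y\simeq f_*\mathcal O_V(K_V+E_2)$. Now the Koll\'ar-type vanishing of Section~\ref{sec-kol} (Theorem~\ref{sugo} with $a=\dim V$, $j=0$) --- which \emph{is} proved by the multiplication-map/summand trick you describe in route (b), but applied upstairs on $V$ to the sheaf $\widetilde\Omega^{\dim V}_V(\log(E_1+E_2))(-E_1)$ --- gives $H^i(X,L\otimes f_*\mathcal O_V(K_V+E_2))=0$ for $i>0$. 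The missing idea in your proposal is precisely this identification of $\mathcal I_Y$ with a direct image of a log-canonical-type sheaf from a resolution, which is what lets the multiplication map act on $\mathcal I_Y$ rather than on $\mathcal O_Y$.
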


We state a special case of the vanishing theorems in Section \ref{sec-kol} 
for the reader's convenience. 

\begin{thm}[cf.~Theorem \ref{sugo}] 
Let $f:Z\to X$ be a toric morphism between 
projective toric varieties and let $A$ and $B$ be reduced 
torus invariant Weil divisors on $Z$ without common irreducible 
components. Let $L$ be 
an ample line bundle on $X$. 
Then $H^i(X, L\otimes R^jf_*\widetilde {\Omega}^a_Z
(\log (A+B))(-A))=0$ for any $i>0$, $j\geq 0$, and 
$a\geq 0$. 
\end{thm}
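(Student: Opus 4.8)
The plan is to reduce the statement about $R^jf_*\widetilde\Omega^a_Z(\log(A+B))(-A)$ on $X$ to the vanishing theorems already available on $Z$ itself, using a Leray-type spectral sequence together with the fact that $f$ is a projective toric morphism. First I would recall that for a toric morphism $f\colon Z\to X$ between projective toric varieties and an ample line bundle $L$ on $X$, there is a toric description of $f$ in terms of fans: $X$ corresponds to a fan $\Delta_X$ in $N_X\otimes\mathbb R$, $Z$ to a fan $\Delta_Z$ in $N_Z\otimes\mathbb R$, and $f$ to a map of lattices $N_Z\to N_X$ compatible with the fans. The key point is that the higher direct images $R^jf_*\mathcal F$ of a coherent sheaf $\mathcal F$ on $Z$ are computed orbit-by-orbit on $X$, and when $\mathcal F$ is one of Ishida's sheaves $\widetilde\Omega^a_Z(\log(A+B))(-A)$ these direct images again decompose into pieces built from Ishida complexes on toric fibers. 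I would want to check that on each affine toric chart $U_\sigma\subset X$, $R^jf_*\widetilde\Omega^a_Z(\log(A+B))(-A)$ is generated by (equivariant) global sections, so that tensoring with an ample $L$ behaves well.

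The second step is to set up the Leray spectral sequence
$$
E_2^{i,j}=H^i(X, L\otimes R^jf_*\widetilde\Omega^a_Z(\log(A+B))(-A))\Rightarrow H^{i+j}(Z, \widetilde\Omega^a_Z(\log(A+B))(-A)\otimes f^*L).
$$
By Theorem \ref{b-I} (or rather its consequence for ample bundles, which is exactly the generalization of Danilov vanishing proved in \cite{fujino1} and extended in Section \ref{sec2}), the abutment $H^{i+j}(Z, \widetilde\Omega^a_Z(\log(A+B))(-A)\otimes f^*L)$ vanishes for $i+j>0$ \emph{provided} $f^*L$ is ample — but $f^*L$ is only nef and big in general, so this direct route is not quite enough. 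This is where I expect the main obstacle to lie: controlling the higher cohomology when $f^*L$ is merely semiample rather than ample. The standard fix, which I would adopt, is to replace $L$ by a sufficiently high power and use the multiplication-map technique from \cite{fujino1}: the multiplication map $L^{\otimes m}\to L^{\otimes lm}$ induces a split injection (as toric sheaves, after suitable normalization) on the relevant cohomology groups, and combined with Serre vanishing for $m\gg 0$ this forces the cohomology to vanish for all positive powers. Concretely, one shows $H^i(X, L\otimes R^jf_*\widetilde\Omega^a_Z(\log(A+B))(-A))$ injects into $H^i(X, L^{\otimes l}\otimes R^jf_*\widetilde\Omega^a_Z(\log(A+B))(-A))$ for every $l\ge 1$ via the multiplication map, using that $R^jf_*\widetilde\Omega^a_Z(\log(A+B))(-A)$ is (as one checks in the first step) a direct summand of $\bigoplus R^jf_*$ of a sheaf pulled back appropriately, or more simply that the whole construction is compatible with the grading by $M_X$.

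The third step is then purely formal: choose $l$ large enough that $L^{\otimes l}$ is very ample and Serre vanishing gives $H^i(X, L^{\otimes l}\otimes R^jf_*\widetilde\Omega^a_Z(\log(A+B))(-A))=0$ for all $i>0$; the multiplication-map injection from the previous step then yields $H^i(X, L\otimes R^jf_*\widetilde\Omega^a_Z(\log(A+B))(-A))=0$ for all $i>0$, all $j\ge 0$, all $a\ge 0$, which is the assertion. I would organize the write-up so that the only genuinely new ingredient is the identification of $R^jf_*\widetilde\Omega^a_Z(\log(A+B))(-A)$ as an $M_X$-graded sheaf on $X$ compatible with the multiplication maps; everything after that is the machinery of Section \ref{sec2} and \ref{sec3} applied verbatim. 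The last sentence of the theorem about projective toric varieties (as opposed to the more general statement referenced as Theorem \ref{sugo}) is just the special case, and the extension Theorem \ref{ext-thm} follows by taking $Z\to X$ to be a toric resolution adapted to $Y$ and $a=0$, $A=$ the reduced exceptional-plus-strict-transform divisor, using the short exact sequence $0\to\mathcal I_Y\to\mathcal O_X\to\mathcal O_Y\to 0$ — though that deduction belongs in Section \ref{sec-kol} rather than here.
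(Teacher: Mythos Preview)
Your core approach is the paper's: the split injection $\mathcal F'\hookrightarrow F^Z_*\mathcal F$ from Proposition~\ref{a-1} pushes forward to a split injection $R^jf'_*\mathcal F'\hookrightarrow F^X_*R^jf_*\mathcal F$ (this is Lemma~\ref{51}, using only that $F^X\circ f=f'\circ F^Z$ and that $F^X,F^Z$ are finite), whence $H^i(X,L\otimes R^jf_*\mathcal F)\hookrightarrow H^i(X,L^{\otimes l}\otimes R^jf_*\mathcal F)$ and Serre vanishing finishes (Proposition~\ref{52}). Your Leray spectral sequence and the orbit-by-orbit or $M_X$-grading analysis of $R^jf_*\mathcal F$ are unnecessary detours --- the split injection on higher direct images is purely formal and requires no structural understanding of $R^jf_*\mathcal F$ beyond coherence.
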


In the final section:~Section \ref{sec4}, we treat toric polyhdera 
as {\em{quasi-log varieties}} and explain the background 
and motivation of this work. 
 
\begin{ack}
I was partially supported by the Grant-in-Aid for Young Scientists 
(A) $\sharp$17684001 from JSPS. I was 
also supported by the Inamori Foundation. 
I am grateful to Professor Shigefumi Mori for his questions, 
comments, and warm 
encouragement. 
I thank Hiroshi Sato and the referee for their comments. 
I also thank Takeshi Abe for answering my question. 
\end{ack}

\begin{notation}
Let $N$ be a free $\mathbb Z$-module of rank $n\geq 0$ and 
let $M$ be its dual $\mathbb Z$-module. 
The natural pairing $\langle \ , \ \rangle: N\times M\to 
\mathbb Z$ is extended to the bilinear 
form $\langle \ , \ \rangle: N_{\mathbb R}\times 
M_{\mathbb R}\to \mathbb R$, where 
$N_{\mathbb R}=N\otimes _{\mathbb Z}\mathbb R$ and 
$M_{\mathbb R}=M\otimes _{\mathbb Z}\mathbb R$. 
A non-empty subset $\sigma$ 
of $N_{\mathbb R}$ is said to 
be a {\em{cone}} if there 
exists a finite subset $\{n_1, \cdots, n_s\}$ of $N$ such that 
$\sigma =\mathbb R_{\geq 0}n_1+\cdots 
+\mathbb R_{\geq 0} n_s$, where 
$\mathbb R_{\geq 0}=\{ r\in \mathbb R; r \geq 0\}$, 
and that $\sigma \cap (-\sigma)=\{0\}$, 
where $-\sigma =\{-a; a\in \sigma\}$. 
A subset $\rho$ of a cone $\sigma$ is said to be 
a {\em{face}} of $\sigma$ and we denote $\rho\prec 
\sigma$ if 
there exists an element $m$ of $M_{\mathbb R}$ such that 
$\langle a, m \rangle\geq 0$ for every $a\in \sigma$ and 
$\rho =\{a \in \sigma; \langle a, m \rangle =0\}$. 
A set $\Delta$ of cones of $N_\mathbb R$ is said to 
be a {\em{fan}} if (1) $\sigma\in \Delta$ and 
$\rho\prec \sigma$ imply $\rho \in \Delta$, and 
(2) $\sigma, \tau \in \Delta$ and $\rho =\sigma 
\cap \tau$ imply $\rho \prec \sigma$ and $\rho \prec \tau$. 
We do not assume that $\Delta$ is {\em{finite}}, 
that is, $\Delta$ does not always consist 
of a finite number of 
cones. 
For a cone $\sigma$ of $N_{\mathbb R}$, 
$\sigma ^{\vee} =\{ x \in M_{\mathbb R}; 
\langle a, x\rangle \geq 0 \ \text{for every} \ a \in \sigma\}$ and 
$\sigma ^{\perp} =\{ x \in M_{\mathbb R}; 
\langle a, x\rangle = 0 \ \text{for every} \ a \in \sigma\}$. 
Let $X=X(\Delta)$ be the {\em{toric variety}} 
associated to 
a fan $\Delta$. 
Note that $X$ is just {\em{locally}} 
of finite type over $k$ in our notation, 
where $k$ is the base field of $X(\Delta)$. 
Each cone $\sigma$ of $\Delta$ uniquely defines 
an $(n-\dim \sigma)$-dimensional 
torus $T_{N(\sigma)}=\Spec k [M\cap 
\sigma ^{\perp}]$ on $X(\Delta)$. 
The closure of $T_{N(\sigma)}$ in $X(\Delta)$ 
is denoted by $V(\sigma)$. 
\end{notation}

\section{Vanishing theorem and $E_1$-degeneration}\label{sec2}  

We will work over a fixed field $k$ of any characteristic throughout this 
section. 
 
\subsection{Toric polyhedra} 
Let us recall the definition of {\em{toric polyhedra}}. 
See \cite[Definition 3.5]{ishida}. 

\begin{defn-a}\label{10} 
For a subset $\Phi$ of a fan $\Delta$, we say that 
$\Phi$ is {\em{star closed}} 
if $\sigma \in \Phi$, $\tau \in \Delta$ and 
$\sigma \prec \tau$ imply $\tau \in \Phi$. 
\end{defn-a}

\begin{defn-a}[Toric polyhedron]\label{11} 
For a star closed subset $\Phi$ of a fan $\Delta$, 
we denote by $Y=Y(\Phi)$ the reduced 
subscheme $\bigcup _{\sigma\in \Phi} V(\sigma)$ of 
$X=X(\Delta)$, and we call it 
the {\em{toric polyhedron associated to 
$\Phi$}}. 
\end{defn-a}

\begin{ex-a}\label{23} 
Let $X=\mathbb P^2$ and let $T\subset \mathbb P^2$ be 
the big torus. We put $Y=\mathbb P^2\setminus T$. 
Then $Y$ is a toric polyhedron, which is a circle of 
three projective lines. 
\end{ex-a}

The above example is a special case of the following 
one. 

\begin{ex-a}\label{231} 
Let $X=X(\Delta)$ be an $n$-dimensional toric variety. 
We put $\Phi_m=\{ \sigma \in \Delta; 
\dim \sigma \geq m\}$ for $0\leq m\leq n$. 
Then $\Phi_m$ is a star closed subset of $\Delta$ and 
the toric polyhedron $Y_m=Y(\Phi_m)$ is pure 
$(n-m)$-dimensional. 
\end{ex-a}

\begin{ex-a}
We consider $X=\mathbb A^3_k=\Spec 
k [x_1, x_2, x_3]$. 
Then the subvariety $Y=(x_1=x_2=0)\cup 
(x_3=0)\simeq \mathbb A^1_k\cup 
\mathbb A^2_k$ of $X$ is a toric polyhedron, which is not pure dimensional. 
\end{ex-a}

\begin{rem-a}\label{24}
Let $Y$ be a toric polyhedron. 
We do not know how to describe line bundles on $Y$ by 
combinatorial 
data. Note that a line bundle $L$ on $Y$ can not necessarily be 
extended 
to a line bundle $\mathcal L$ on $X$.  
\end{rem-a}

In \cite{ishida}, Ishida defined the de Rham complex 
$\widetilde \Omega^{\bullet}_Y$ of a toric polyhedron $Y$. 
When $Y$ is a toric variety, Ishida's de Rham complex is 
nothing but Danilov's de Rham complex (see \cite[Chapter I.~\S4]
{danilov}). 
For the details, see \cite{ishida}. 
Here, we quickly review $\widetilde \Omega^{\bullet}_Y$ when 
$X$ is affine. 

\begin{say-a}[Ishida's de Rham complex]\label{de-r} 
We put $\Delta=\{\pi, \ {\text{its faces}}\}$, 
where $\pi$ is a cone in $N_{\mathbb R}$. 
Then 
$X=X(\Delta)$ is an affine toric 
variety $\Spec k[M\cap \pi^{\vee}]$. Let $\Phi$ 
be a star closed subset of $\Delta$ and 
let $Y$ be the toric polyhedron associated to $\Phi$. 
In this case, $\widetilde \Omega^a_Y$ is 
an $\mathcal O_Y=k[M\cap \pi^{\vee}]/k[M\cap 
(\pi^{\vee}\setminus (\cup _{\sigma \in \Phi}
\sigma ^{\perp}))]$-module 
generated by $x^m\otimes m_{\alpha_1}\wedge 
\cdots \wedge m_{\alpha_a}$, where 
$m\in M\cap (\pi^{\vee}\cap 
(\cup _{\sigma \in \Phi}\sigma ^{\perp}))$ and 
$m_{\alpha_1}, \cdots, m_{\alpha_a}\in M[\rho (m)]$, 
for any $a\geq 0$. 
Note that $\rho (m)=\pi\cap m^{\perp}$ is 
a face of $\pi$ when 
$m\in M\cap \pi^{\vee}$, and 
that $M[\rho(m)]=M\cap \rho (m)^{\perp}\subset M$. 
\end{say-a}

\subsection{Multiplication maps}\label{sub22} 
In this subsection, let us quickly review 
the multiplication maps in \cite[Section 2]{fujino1}. 

\begin{say-a}[Multiplication maps]\label{o21}
For a fan $\Delta$ in $N_{\mathbb R}$, we 
have the associated toric variety $X=X(\Delta)$. 
We put $N'=\frac{1}{l}N$ and $M'=\Hom _{\mathbb Z}(N', \mathbb Z)$ for 
any positive integer $l$. 
We note that $M'=lM$. 
Since $N_{\mathbb R}=N'_{\mathbb R}$, $\Delta$ is also a fan in 
$N'_{\mathbb R}$. 
We write $\Delta'$ to express the fan $\Delta$ in $N'_{\mathbb R}$. 
Let $X'=X(\Delta')$ be the associated toric variety. 
We note that $X\simeq X'$ as toric varieties. 
We consider 
the natural inclusion $\varphi:N\to N'$. 
Then $\varphi$ induces a finite surjective toric morphism $F:X\to X'$. 
We call it the {\em{$l$-times multiplication map}} of $X$. 
\end{say-a} 

\begin{rem}The $l$-times multiplication map $f:X\to X'$ should 
be called the {\em{$l$-times multiplication map}} of $X$. However, we follow 
\cite{fujino1} in this paper. 
\end{rem}

\begin{say-a}[Convention]\label{o22}
Let $\mathcal A$ be an object on $X$. 
Then we write $\mathcal A'$ 
to indicate the corresponding 
object on $X'$. 
Let $\Phi$ be a star closed subset of 
$\Delta$ and 
let $Y$ be the toric polyhedron associated to 
$\Phi$. 
Then $F:X\to X'$ induces a finite surjective morphism 
$F:Y\to Y'$. 
\end{say-a}

\begin{say-a}[Split injections on the big torus]\label{o23} 
By fixing a base of $M$, we have 
$k[M]\simeq k[x_1, x^{-1}_1, \cdots, x_n, x^{-1}_n]$. 
We can write $x^m=x^{m_1}_1 x^{m_2}_2
\cdots x^{m_n}_n$ for 
$m=(m_1, \cdots, m_n)\in \mathbb Z^n=M$. 
Let $T$ be the big torus of $X$. 
Then we have the isomorphism of $\mathcal O_T=k[M]$-modules 
$k[M]\otimes _{\mathbb Z}\wedge ^a M
\to \Omega^a_T$ for any $a\geq 0$ 
induced by 
$$x^{m}\otimes m_{\alpha_1}\wedge 
\cdots \wedge m_{\alpha_a} 
\mapsto x^{m}
\frac{dx^{m_{\alpha_1}}}{x^{m_{\alpha_1}}}\wedge 
\cdots \wedge 
\frac{dx^{m_{\alpha_a}}}{x^{m_{\alpha_a}}},$$  
where $m, m_{\alpha_1}, \cdots, m_{\alpha_a}
\in \mathbb Z^n=M$. 
Therefore, $F_*\Omega^a_T$ 
corresponds to a $k[M']$-module $k[M]\otimes _{\mathbb Z}\wedge ^aM$. 
We consider the $k[M']$-module 
homomorphisms $k[M']\otimes _{\mathbb Z}\wedge ^aM'
\to k[M]\otimes _{\mathbb Z}\wedge ^aM$ given by 
$x^{m_{\beta}}\otimes m_{\alpha_1}\wedge \cdots \wedge 
m_{\alpha_a} \mapsto x^{lm_{\beta}}\otimes 
m_{\alpha_1}\wedge \cdots \wedge 
m_{\alpha_a}$, 
and $k[M]\otimes _{\mathbb Z}\wedge ^a M\to 
k[M']\otimes _{\mathbb Z}\wedge^aM'$ induced by 
$x^{m_{\gamma}}\otimes m_{\alpha_1}\wedge \cdots \wedge 
m_{\alpha_a}\mapsto x^{m_{\beta}}\otimes 
m_{\alpha_1}\wedge \cdots \wedge 
m_{\alpha_a}$ if $m_{\gamma}=lm_{\beta}$ 
and $x^{m_{\gamma}}\otimes m_{\alpha_1}\wedge \cdots \wedge 
m_{\alpha_a}\mapsto 0$ otherwise. 
Thus, these $k[M']$-module homomorphisms give 
split injections 
$\Omega^a_{T'}\to F_*\Omega^a_T$ for any $a\geq 0$. 
\end{say-a}

\subsection{Proof of the vanishing theorem and $E_1$-degeneration} 
Let us start the proof of the vanishing theorem and $E_1$-degeneration. 
The next proposition plays a key role in the proof. 

\begin{prop-a}\label{135} 
Let $X$ be a toric variety and let $Y\subset X$ be a toric polyhedron. 
Let $F:X\to X'$ be the $l$-times multiplication map 
and let $F:Y\to Y'$ be the induced map. 
Then there exists a split injection 
$\widetilde \Omega^a_{Y'}\to F_*\widetilde \Omega^a_Y$ for 
any $a\geq 0$. 
\end{prop-a}
\begin{proof}
We write $X=X(\Delta)$ and $Y=Y(\Phi)$. 
Then $Y(\Phi)$ has the open covering 
$\{ Y(\Phi)\cap U(\pi); \pi \in \Phi\}$, where 
$U(\pi)=\Spec k [M\cap \pi^{\vee}]$. 
We put $Z=Z(\Psi)=Y(\Phi)\cap 
U(\pi)$. 
Then, by the description of $\widetilde \Omega^a_Z$ in 
\cite[Section 2]{ishida} or \ref{de-r}, 
we have natural embeddings $\mathcal O_Z\subset 
k[M]$ and $\widetilde \Omega^a_Z\subset k[M]\otimes 
_{\mathbb Z}\wedge ^aM$ for any $a>0$ as $k$-vector spaces. 
Note that $\mathcal O_Z$ is spanned by 
$\{x^m; m\in M\cap (\pi^{\vee}\cap (\cup _{\sigma \in 
\Psi}\sigma^{\perp}))\}$ 
as a $k$-vector space. 
In \ref{o23}, 
we constructed split injections 
$k[M']\otimes _{\mathbb Z}\wedge ^aM'\to 
k[M]\otimes _{\mathbb Z}\wedge ^aM$ for 
any $a\geq 0$. 
This split injections induce 
split injections 
$$
\begin{array}{ccc} 
\widetilde \Omega^a_{Z'}&\to&F_*\widetilde \Omega^a_Z\\ 
\cap & &\cap \\
k[M']\otimes _{\mathbb Z}\wedge ^a M' &\to 
& k[M]\otimes _{\mathbb Z}\wedge ^a M
\end{array}
$$ 
for all $a\geq 0$ as $k$-vector spaces. 
However, it is not difficult to see 
that $\widetilde \Omega^a_{Z'}\to F_*
\widetilde \Omega^a_Z$ 
and its split $F_*\widetilde \Omega^a_Z
\to \widetilde \Omega^a_{Z'}$ 
are $\mathcal O_{Z'}$-homomorphisms for 
any $a\geq 0$. 
The above constructed split injections 
for $Y(\Phi)\cap U(\pi)$ can be patched together. 
Thus, we 
obtain split injections $\widetilde 
\Omega^a_{Y'}\to F_*\widetilde \Omega^a_Y$ for 
any $a\geq 0$. 
\end{proof}

The following theorem is one of the main theorems 
of this paper. 
It is a generalization of Danilov's vanishing theorem 
for toric varieties (see \cite[7.5.2. Theorem]{danilov}). 

\begin{thm-a}[cf.~Theorem \ref{00}]\label{12}
Let $Y=Y(\Phi)$ be a projective toric 
polyhedron defined over a field $k$ of any characteristic. Then 
$$
H^i(Y, \widetilde \Omega^a_Y\otimes L)=0 \ \ \text{for} \ \ i\ne 0
$$ 
holds for every ample line bundle $L$ on $Y$. 
\end{thm-a}
\begin{proof}
We assume that $l=p>0$, where 
$p$ is the characteristic of $k$. 
In this case, $F^*L'\simeq L^{\otimes p}$. Thus, 
we obtain 
\begin{align*}
H^i(Y, \widetilde \Omega^a_Y\otimes L)&\simeq 
H^i(Y', \widetilde \Omega^a_{Y'}\otimes L')
\\ 
&\subset 
H^i(Y', F_*\widetilde \Omega^a_Y\otimes L')\\&\simeq 
H^i(Y, \widetilde \Omega^a_Y\otimes L^{\otimes p}), 
\end{align*} 
where we used the split injection in Proposition \ref{135} 
and the projection formula. 
By iterating 
the above arguments, 
we obtain 
$H^i(Y, \widetilde \Omega^a_Y\otimes L)\subset 
H^i(Y, \widetilde \Omega^a_Y\otimes L^{\otimes p^r})$ 
for any positive integer $r$. 
By Serre's vanishing theorem, we obtain 
$H^i(Y, \widetilde \Omega^a_Y\otimes L)=
H^i(Y, \widetilde \Omega^a_Y\otimes L^{\otimes p^r})=0$ for 
$i>0$. 
When the characteristic of $k$ is zero, 
we can assume that everything is defined over $R$, 
where 
$R (\supset \mathbb Z)$ is a finitely generated ring. 
By the above result, the vanishing theorem holds 
over $R/P$, 
where $P$ is any general maximal 
ideal of $R$, since $R/P$ is a finite field and 
the ampleness is an open condition. 
Therefore, 
we have the desired vanishing theorem over the 
generic point of $\Spec R$. 
Of course, it holds over $k$. 
\end{proof}

If $Y$ is a toric variety, then Theorem \ref{12} is nothing 
but Danilov's vanishing theorem. 
For the other vanishing theorems on 
toric varieties, see \cite{fujino1} and 
the results in Sections \ref{sec3} and \ref{sec-kol}. 
The next corollary 
is a special case of Theorem \ref{12}. 

\begin{cor-a}\label{13}
Let $Y=Y(\Phi)$ be a projective toric 
polyhedron and $L$ an ample line bundle on 
$Y$. Then we obtain 
$H^i(Y, L)=0$ for any $i>0$. 
\end{cor-a}
\begin{proof}
It is sufficient to remember that $\widetilde \Omega^0_Y
\simeq \mathcal O_Y$. 
\end{proof}

\begin{rem-a}
Let $X$ be a projective toric variety. Then, it is obvious that 
$H^i(X, \mathcal O_X)=0$ for $i>0$. However, 
$H^i(Y, \mathcal O_Y)$ is not necessarily zero 
for some $i>0$ when $Y$ is a projective toric 
polyhedron. See Example \ref{23}. 
More explicitly, let $X$ be an $n$-dimensional 
non-singular complete toric variety. We put 
$Y=X\setminus T$, where 
$T$ is the big torus. 
Then $H^{n-1}(Y, \mathcal O_Y)$ is dual to 
$H^0(Y, \mathcal O_Y)$ since 
$K_Y\sim 0$. Therefore, 
$H^{n-1}(Y, \mathcal O_Y)\ne \{0\}$. 
\end{rem-a}

The following theorem is a supplement 
to Theorem \ref{12}. 
\begin{thm-a}
Let $Y$ be a toric polyhedron on a toric 
variety $X$. 
Let $L$ be a line bundle on $Y$. Assume that 
$L=\mathcal L|_Y$ for some line bundle $\mathcal L$ on $X$. 
If $H^i(Y, \widetilde \Omega^a_Y\otimes L^{\otimes l})=0$ for 
some positive integer $l$, 
then $H^i(Y, \widetilde \Omega^a_Y\otimes 
L)=0$.  
\end{thm-a}
\begin{proof}
Let $F:X\to X'$ be the $l$-times 
multiplication map. Then $F^*\mathcal L'\simeq 
\mathcal L^{\otimes l}$. Therefore, 
$F^*L'\simeq L^{\otimes l}$. By the 
same argument as in the proof of 
Theorem \ref{12}, we obtain the desired statement.  
\end{proof}

By the construction of the split injections 
in Proposition \ref{135} and 
the definition of the exterior derivative, we have the following 
proposition. 

\begin{prop-a}\label{14} 
We assume that $l=p>0$, where 
$p$ is the characteristic of $k$. 
Then there exist morphisms of complexes 
$$
\phi:\bigoplus_{a\geq 0} \widetilde \Omega ^a_{Y'}[-a]\to 
F_*\widetilde \Omega ^{\bullet}_Y 
$$ 
and 
$$
\psi: 
F_*\widetilde \Omega ^{\bullet}_Y\to 
\bigoplus_{a\geq 0} \widetilde \Omega ^a_{Y'}[-a] 
$$ 
such that $\psi\circ \phi$ is 
a quasi-isomorphism. 
Note that 
the complex 
$\bigoplus_{a\geq 0} \widetilde \Omega ^a_{Y'}[-a]$ 
has zero differentials. 
\end{prop-a}
\begin{proof}
We consider the following diagram. 
$$
\begin{CD}
\widetilde\Omega^{a}_{Y'}@>{\phi_a}>>F_*\widetilde \Omega^{a}_Y@>{\psi_a}>> 
\widetilde \Omega^a_{Y'}\\ 
@V{0}VV @VV{d}V @VV{0}V\\
\widetilde \Omega^{a+1}_{Y'}@>>{\phi_{a+1}}>
F_*\widetilde \Omega^{a+1}_Y@>>{\psi_{a+1}}>
\widetilde \Omega^{a+1}_{Y'}
\end{CD}
$$ 
Here, $\phi_i$ and $\psi_i$ are $\mathcal O_{Y'}$-homomorphisms 
constructed in Proposition \ref{135} for any $i$. 
Since we assume that $l=p$, the above diagram is commutative. 
Therefore, we obtain the desired morphisms of complexes 
$\phi$ and $\psi$. 
\end{proof}

As an application of Proposition \ref{14}, we can prove the 
$E_1$-degeneration of Hodge to de Rham type spectral 
sequence for 
toric polyhedra. 

\begin{thm-a}[{cf.~Theorem \ref{01}}]\label{15} 
Let $Y=Y(\Phi)$ be a complete 
toric polyhedron. 
Then the spectral sequence 
$$
E^{a,b}_1=H^b(Y, \widetilde \Omega^a_Y)
\Rightarrow \mathbb H^{a+b} 
(Y, \widetilde \Omega ^{\bullet}_Y)
$$
degenerates at the $E_1$-term. 
\end{thm-a}
\begin{proof}
The following proof is well known. 
See, for example, 
the proof of Theorem 4 in \cite{btlm}. 
We assume that $l=p>0$, where 
$p$ is the characteristic of $k$. 
Then, by Proposition \ref{14}, 
\begin{eqnarray*}
\sum _{a+b=n}\dim _k E^{a,b}_{\infty} =
\dim _k \mathbb H^n(Y, \widetilde \Omega^{\bullet}_Y)
=\dim _k \mathbb H^n(Y, F_*\widetilde \Omega^{\bullet}_Y)\\
\geq \sum _{a+b=n}\dim _k H^b(Y', \widetilde \Omega^a_{Y'})
=\sum _{a+b=n} \dim _k E^{a,b}_1. 
\end{eqnarray*}
In general, $\sum _{a+b=n}\dim _k 
E^{a, b}_{\infty} \leq \sum _{a+b=n}\dim _kE^{a, b}_1$. 
Therefore, $E^{a, b}_{\infty} \simeq 
E^{a, b}_1$ holds and 
the spectral sequence degenerates at $E_1$. 
When the characteristic of $k$ is zero, 
we can assume that everything is defined over $\mathbb Q$. 
Moreover, we can construct a toric polyhedron $\mathcal Y$ 
defined over $\mathbb Z$ such that 
$Y=\mathcal Y\times _{\Spec \mathbb Z}\Spec \mathbb Q$. 
By applying the above $E_1$-degeneration 
on a general fiber of $f:{\mathcal Y}\to {\Spec\mathbb Z}$ and 
the base change theorem, 
we obtain that 
$\sum _{a+b=n}
\dim _{\mathbb Q}E^{a, b}_1=\dim _{\mathbb Q} \mathbb H^n(Y, \widetilde 
\Omega^{\bullet}_Y)$. 
In particular, 
$\sum _{a+b=n}
\dim _{k}E^{a, b}_1=\dim _k \mathbb H^n(Y, \widetilde 
\Omega^{\bullet}_Y)$ and 
we have the desired $E_1$-degeneration over $k$. 
\end{proof}

We close this section with the following two remarks on 
Ishida's results. 

\begin{rem-a} 
If $k=\mathbb C$ 
and $\Delta$ consists 
of a finite number of cones, then Ishida's de Rham complex 
$\widetilde \Omega^{\bullet}_Y$ is canonically isomorphic to 
the 
Du Bois complex 
$\underline{{\underline{\Omega}}}^{\bullet}_Y$ 
(see Theorem 4.1 
in \cite{ishida}). 
Therefore, the $E_1$-degeneration in Theorem \ref{15} 
was known when $k=\mathbb C$. 
We note that $\mathbb H^{a+b}(Y, \widetilde \Omega^{\bullet}_Y)$ 
is isomorphic to $H^{a+b} (Y, \mathbb C)$ in this case.  
\end{rem-a} 

\begin{rem-a}
Let $Y=Y(\Phi)$ be a toric polyhedron. 
In \cite[p.130]{ishida}, Ishida introduced 
a complex 
$C^{\bullet}(\Phi^{(2)}, \mathcal O\otimes \varLambda ^a)$. 
For the definition and the basic properties, 
see \cite[Sections 2 and 3]{ishida}. 
Note that $\widetilde \Omega^a_Y\to 
C^{0}(\Phi^{(2)}, \mathcal O\otimes \varLambda ^a)
\to \cdots \to 
C^{j}(\Phi^{(2)}, \mathcal O\otimes \varLambda ^a)
\to \cdots$ is a resolution of $\widetilde \Omega^a_Y$, 
that is, $\widetilde \Omega^a_Y\simeq 
\mathcal H^0(C^{\bullet}(\Phi^{(2)}, \mathcal O\otimes \varLambda ^a))$ 
and $\mathcal H^i(C^{\bullet}(\Phi^{(2)}, 
\mathcal O\otimes \varLambda ^a))=0$ 
for $i\ne 0$ (cf.~\cite[Proposition 2.4]
{ishida}). 
Assume that $Y$ is complete. 
Let $L$ be a nef line bundle on $Y$. 
Then it is not difficult to 
see that $\widetilde \Omega^a_Y
\otimes L\to C^{\bullet}(\Phi^{(2)}, \mathcal O\otimes \varLambda ^a)\otimes 
L$ is a $\Gamma$-acyclic resolution of $\widetilde \Omega^a_Y\otimes 
L$. 
Therefore, if $L$ is ample, then Theorem \ref{12} 
implies that 
$H^0(Y, \widetilde \Omega^a_Y\otimes L)=H^0
(D^{\bullet})$ 
and $H^i(Y, \widetilde \Omega^a_Y\otimes L)
=H^i(D^{\bullet})=0$ 
for $i\ne 0$, 
where $D^{\bullet}$ is a complex of $k$-vector spaces 
$\Gamma (Y, C^{\bullet}(\Phi^{(2)}, 
\mathcal O\otimes \varLambda ^a)\otimes L)$. 
\end{rem-a}

\section{Suppplements}\label{sec3}  

In this section, we make some remarks on my 
paper \cite{fujino1}. 
Let $X=X(\Delta)$ be a toric variety defined 
over a field $k$ of any characteristic. Note that 
$\Delta$ is not assumed to be finite in this section. 
First, we define 
$\widetilde \Omega^a_X(\log(A+B))(-A)$, which is a slight generalization 
of $\widetilde \Omega^a_X(\log B)$ in \cite[Definition 1.2]{fujino1}. 

\begin{defn}
Let $X$ be a toric variety and let $A$ and $B$ be reduced torus 
invariant Weil divisors on $X$ without common irreducible 
components. 
We put $W=X\setminus \Sing (X)$, where 
$\Sing (X)$ is the singular locus of $X$. 
Then we define 
$\widetilde \Omega^a_X(\log(A+B))(-A)=
\iota_*(\Omega^a_W(\log (A+B))\otimes \mathcal O_W(-A))$ for any 
$a\geq 0$, 
where $\iota:W\hookrightarrow X$ is the natural open immersion. 
\end{defn}

By the same argument as in \cite[Section 2]{fujino1} 
(see also Subsection \ref{sub22}), 
the split injection $\Omega^a_{T'}\to F_*\Omega^a_T$ induces the following 
split injection. 

\begin{prop}\label{a-1} 
Let $F:X\to X'$ be the $l$-times multiplication 
map. 
Then the split injection $\Omega ^a_{T'}\to F_*\Omega^a_T$ naturally 
induces the following split injection 
$\widetilde \Omega^a_{X'}(\log(A'+B'))(-A')\to 
F_*\widetilde \Omega^a_X(\log(A+B))(-A)$ for any $a\geq 0$. 
\end{prop}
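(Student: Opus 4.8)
The plan is to reduce the statement on $X$ to the big torus $T$ and its behavior along the torus-invariant divisors, in exactly the pattern used for $\widetilde\Omega^a_X(\log B)$ in \cite[Section 2]{fujino1}. First I would recall that, because $X$ is normal and $A+B$ has no common components, the sheaf $\widetilde\Omega^a_X(\log(A+B))(-A)$ is by definition the pushforward from the smooth locus $W=X\setminus\Sing(X)$, and on $W$ it is a locally free subsheaf of $j_*\Omega^a_T$, where $j:T\hookrightarrow W$ is the big-torus inclusion. Working cone by cone, for each $\sigma\in\Delta$ one has an explicit description of $\Gamma(U_\sigma\cap W,\widetilde\Omega^a_{X}(\log(A+B))(-A))$ as the $k[M\cap\sigma^\vee]$-submodule of $k[M]\otimes_{\mathbb Z}\wedge^a M$ consisting of those $x^m\otimes m_{\alpha_1}\wedge\cdots\wedge m_{\alpha_a}$ satisfying the appropriate support/vanishing conditions relative to the rays of $\sigma$ lying in $A$ (strict vanishing, contributing the $(-A)$) and in $B$ (log poles allowed). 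This is the combinatorial bookkeeping that makes the sheaf manageable.

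Next I would invoke the split injection $\Omega^a_{T'}\to F_*\Omega^a_T$ from \ref{o23}, described there at the level of $k[M']$-modules by $x^{m_\gamma}\otimes(\cdots)\mapsto x^{m_\gamma/l}\otimes(\cdots)$ when $l\mid m_\gamma$ (in the $M$-coordinates: $x^{m_\beta}\mapsto x^{lm_\beta}$ in the other direction) and $0$ otherwise, together with its retraction. The key point is that both maps are "monomial-diagonal": they send a monomial term to a scalar multiple of the monomial term whose $M$-exponent is rescaled by $l$, and they do nothing to the $\wedge^a$-factor. Since multiplication by $l$ is an automorphism of $M_{\mathbb R}$ that carries $\sigma$ to itself and preserves every face, it preserves $\sigma^\vee$, the rays in $A$, the rays in $B$, and hence preserves each of the support/vanishing conditions defining $\widetilde\Omega^a$. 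Concretely: $m_\gamma$ satisfies the defining conditions for $\widetilde\Omega^a_{X}(\log(A+B))(-A)$ on $U_\sigma$ if and only if $m_\gamma/l$ satisfies the corresponding conditions for $\widetilde\Omega^a_{X'}(\log(A'+B'))(-A')$ on $U_{\sigma'}$. Therefore the split injection and its retraction restrict to $\mathcal O_{X'}$-module maps
$$
\widetilde\Omega^a_{X'}(\log(A'+B'))(-A')\longrightarrow F_*\widetilde\Omega^a_X(\log(A+B))(-A)\longrightarrow \widetilde\Omega^a_{X'}(\log(A'+B'))(-A'),
$$
whose composite is the identity, cone by cone. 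One then checks these are compatible on overlaps $U_\sigma\cap U_\tau$ (automatic, since all maps are the restrictions of the global maps on the torus), so they glue to the asserted global split injection.

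The main obstacle, and the only place requiring real care, is verifying that the monomial rescaling $m\mapsto m/l$ matches up the defining conditions of the two sheaves exactly — in particular handling the $(-A)$ twist, where the condition involves a strict inequality $\langle v_\rho,m\rangle\ge 1$ for rays $\rho\subset A$ rather than $\ge 0$, and confirming that after passing to $N'=\frac1l N$ (so the primitive generator of $\rho$ in $N'$ is $\frac1l v_\rho$) the rescaled condition $\langle \frac1l v_\rho, m/l\rangle\cdot l = \langle v_\rho,m\rangle\ge 1$ is literally the same integrality constraint. Once this dictionary between the $N$-picture and the $N'$-picture is set up, everything else is formal: the retraction restricts because it only ever kills or rescales monomials, never creates new ones, and locally free subsheaves of $j_*\Omega^a_T$ behave well under these operations. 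This is why I expect the proof in the paper to be short, essentially "by the same argument as in \cite[Section 2]{fujino1}," with the verification of the combinatorial conditions being the substance.
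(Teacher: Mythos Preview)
Your proposal is correct and matches the paper's approach exactly: the paper gives no explicit proof, stating only ``By the same argument as in \cite[Section 2]{fujino1} (see also Subsection \ref{sub22})'', and you have correctly unpacked what that argument is, right down to anticipating that the only substantive check is the compatibility of the monomial-diagonal maps with the $(-A)$ twist condition. Your prediction that the paper's proof would be short was spot on.
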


The next proposition is obvious by the definition of 
the exterior derivative and 
the construction of the split injections 
in Proposition \ref{a-1} (cf.~Proposition \ref{14}). 

\begin{prop}\label{a-2}
We assume that 
$l=p>0$, where $p$ is the characteristic of $k$. 
Then there exist morphisms of complexes 
$$\phi:\bigoplus _{a\geq 0}\widetilde \Omega^a_{X'}(\log(A'+B'))(-A')[-a]\to 
F_*\widetilde \Omega^{\bullet}_X(\log(A+B))(-A)$$ 
and 
$$
\psi: 
F_*\widetilde \Omega^{\bullet}_X(\log(A+B))(-A)\to 
\bigoplus _{a\geq 0}\widetilde \Omega^a_{X'}(\log(A'+B'))(-A')[-a] 
$$ such that 
the composition $\psi\circ \phi$ is a quasi-isomorphism. 
We note that the complex 
$\bigoplus _{a\geq 0}\widetilde \Omega^a_{X'}(\log(A'+B'))(-A')[-a]$ 
has zero differentials. 
\end{prop}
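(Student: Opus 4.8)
The plan is to mimic the proof of Proposition \ref{14} step by step, with the split injections of Proposition \ref{a-1} replacing those of Proposition \ref{135}. Recall that in Proposition \ref{a-1} the split injection $\phi_a\colon\widetilde\Omega^a_{X'}(\log(A'+B'))(-A')\to F_*\widetilde\Omega^a_X(\log(A+B))(-A)$, together with its splitting $\psi_a$, is constructed from the corresponding $k[M']$-module homomorphisms on the big torus $T$, exactly as in \ref{o23}: on monomials $\phi_a$ sends $x^{m_\beta}\otimes m_{\alpha_1}\wedge\cdots\wedge m_{\alpha_a}$ to $x^{lm_\beta}\otimes m_{\alpha_1}\wedge\cdots\wedge m_{\alpha_a}$, and $\psi_a$ keeps the monomials in the sublattice $lM$ and kills the others. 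The key point, as in Proposition \ref{14}, is that these maps are compatible with the exterior derivative $d$ precisely when $l=p$ equals the characteristic of $k$.

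First I would assemble the two candidate morphisms of complexes. Set $\phi=\bigoplus_{a\ge 0}\phi_a$ and $\psi=\bigoplus_{a\ge 0}\psi_a$, viewing the source (resp.\ target) of $\phi$ as the complex $\bigoplus_{a\ge 0}\widetilde\Omega^a_{X'}(\log(A'+B'))(-A')[-a]$ with zero differentials. To check that $\phi$ is a morphism of complexes I would verify, for each $a$, the commutativity of the square whose top row is $\phi_a$, whose bottom row is $\phi_{a+1}$, whose left vertical map is $0$, and whose right vertical map is the exterior derivative $d\colon F_*\widetilde\Omega^a_X(\log(A+B))(-A)\to F_*\widetilde\Omega^{a+1}_X(\log(A+B))(-A)$; i.e.\ one needs $d\circ\phi_a=0$. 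This is where the hypothesis $l=p$ enters: applying $d$ to $\phi_a(x^{m_\beta}\otimes\omega)=x^{pm_\beta}\otimes\omega$ produces a factor $p$ (from differentiating $x^{pm_\beta}$, written multiplicatively via $d(x^{pm_\beta})/x^{pm_\beta}=p\cdot dx^{m_\beta}/x^{m_\beta}$), which vanishes in characteristic $p$; since everything is computed inside $k[M]\otimes_{\mathbb Z}\wedge^\bullet M$ after restriction to the big torus, where $\widetilde\Omega^\bullet$ embeds as in Proposition \ref{a-1}, this computation suffices. The analogous (in fact easier) check that $\psi$ is a morphism of complexes is immediate because the source has differential $d$ and we must show $0\circ\psi_a=\psi_{a+1}\circ d$, i.e.\ $\psi_{a+1}\circ d=0$; again the image of $d$ on the relevant monomials $x^m\otimes\omega$ with $m\notin pM$ stays outside $pM$ (differentiation does not change the monomial exponent in the multiplicative presentation), so $\psi_{a+1}$ kills it — alternatively one observes $\psi$ is a retraction of $\phi$ and the claim follows formally. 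As a display I would record the commuting diagram
$$
\begin{CD}
\widetilde\Omega^a_{X'}(\log(A'+B'))(-A')@>{\phi_a}>>F_*\widetilde\Omega^a_X(\log(A+B))(-A)@>{\psi_a}>>\widetilde\Omega^a_{X'}(\log(A'+B'))(-A')\\
@V{0}VV @VV{d}V @VV{0}V\\
\widetilde\Omega^{a+1}_{X'}(\log(A'+B'))(-A')@>>{\phi_{a+1}}>F_*\widetilde\Omega^{a+1}_X(\log(A+B))(-A)@>>{\psi_{a+1}}>\widetilde\Omega^{a+1}_{X'}(\log(A'+B'))(-A')
\end{CD}
$$
whose left and right halves commute because the composites $d\circ\phi_a$ and $\psi_{a+1}\circ d$ both vanish for $l=p$.

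Finally I would conclude that $\psi\circ\phi$ is a quasi-isomorphism. But $\psi\circ\phi=\bigoplus_a(\psi_a\circ\phi_a)=\bigoplus_a\mathrm{id}$ by the splitting property in Proposition \ref{a-1}, hence it is literally the identity morphism of the complex $\bigoplus_{a\ge 0}\widetilde\Omega^a_{X'}(\log(A'+B'))(-A')[-a]$, which is in particular a quasi-isomorphism. This gives the statement, and the final sentence about zero differentials is built into the definition of that complex. I expect the only genuine point requiring care — the ``main obstacle'' — to be the verification that $d\circ\phi_a=0$ under $l=p$: one must be careful that the split injections of Proposition \ref{a-1}, which a priori are only defined via the torus embedding $\widetilde\Omega^a\hookrightarrow k[M]\otimes_{\mathbb Z}\wedge^a M$, are genuinely $\mathcal O$-linear maps of the logarithmic-differential sheaves and that the exterior derivative on $\widetilde\Omega^\bullet_X(\log(A+B))(-A)$ is the restriction of the usual one, so that the characteristic-$p$ cancellation computed on the torus propagates to all of $X$; both facts are already recorded in Proposition \ref{a-1} and in \cite{fujino1}, so no new work beyond bookkeeping is needed.
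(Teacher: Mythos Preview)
Your proposal is correct and follows precisely the approach the paper indicates: the paper gives no separate proof of this proposition, merely asserting that it is obvious from the definition of the exterior derivative and the construction of the split injections in Proposition~\ref{a-1}, with a cross-reference to Proposition~\ref{14}. You have faithfully unpacked that cross-reference, reproducing the commutative diagram from the proof of Proposition~\ref{14} with the logarithmic sheaves $\widetilde\Omega^a_X(\log(A+B))(-A)$ in place of $\widetilde\Omega^a_Y$ and explaining the characteristic-$p$ vanishing $d\circ\phi_a=0$ that makes it commute.
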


The following $E_1$-degeneration is a direct consequence 
of Proposition \ref{a-2}. 
See the proof of Theorem \ref{15}. 

\begin{thm}[cf.~Theorem \ref{b-II}]\label{a-III} 
Let $X$ be a complete 
toric variety and let $A$ and $B$ be reduced torus 
invariant Weil divisors on $X$ without common irreducible 
components. 
Then the spectral sequence 
$$
E^{a,b}_1=H^b(X, \widetilde \Omega^a_{X}(\log (A+B))(-A))
\Rightarrow \mathbb H^{a+b}(X, \widetilde \Omega^{\bullet}
_X(\log (A+B))(-A)) 
$$ 
degenerates at the $E_1$-term. 
\end{thm}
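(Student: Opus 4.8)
The plan is to mimic the proof of Theorem \ref{15} verbatim, replacing Ishida's de Rham complex $\widetilde\Omega^\bullet_Y$ by the logarithmic complex $\widetilde\Omega^\bullet_X(\log(A+B))(-A)$ and Proposition \ref{14} by Proposition \ref{a-2}. First I would reduce to the case where $k$ has positive characteristic $p$ by a standard spreading-out argument: choose a model of $X$, $A$, $B$ over a finitely generated subring $R$ of $k$ (or over $\mathbb{Z}$, noting $\Delta$ may be infinite but everything is local so this is harmless), prove the $E_1$-degeneration on a general closed fiber, and use the base change theorem for hypercohomology to conclude that $\sum_{a+b=n}\dim_k E_1^{a,b}=\dim_k\mathbb{H}^n(X,\widetilde\Omega^\bullet_X(\log(A+B))(-A))$ over the generic point, hence over $k$.

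So assume $\mathrm{char}\,k=p>0$ and take $l=p$ in the multiplication map $F:X\to X'$. The key computation is the chain of (in)equalities
\begin{align*}
\sum_{a+b=n}\dim_k E_\infty^{a,b}
&=\dim_k\mathbb{H}^n(X,\widetilde\Omega^\bullet_X(\log(A+B))(-A))\\
&=\dim_k\mathbb{H}^n(X',F_*\widetilde\Omega^\bullet_X(\log(A+B))(-A))\\
&\geq\sum_{a+b=n}\dim_k H^b(X',\widetilde\Omega^a_{X'}(\log(A'+B'))(-A'))
=\sum_{a+b=n}\dim_k E_1^{a,b},
\end{align*}
where the middle equality is that $F$ is finite (so affine), the inequality comes from Proposition \ref{a-2}: the maps $\phi,\psi$ realize $\bigoplus_a\widetilde\Omega^a_{X'}(\log(A'+B'))(-A')[-a]$ (a complex with zero differentials, so its hypercohomology splits as $\bigoplus_{a+b=n}H^b$) as a direct summand — up to quasi-isomorphism — of $F_*\widetilde\Omega^\bullet_X(\log(A+B))(-A)$, so $\mathbb{H}^n$ of the latter contains $\mathbb{H}^n$ of the former as a subspace. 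Combined with the general spectral sequence inequality $\sum_{a+b=n}\dim_k E_\infty^{a,b}\leq\sum_{a+b=n}\dim_k E_1^{a,b}$, all inequalities are forced to be equalities, whence $E_1^{a,b}\simeq E_\infty^{a,b}$ and the spectral sequence degenerates.

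One point to check carefully is that all the relevant cohomology and hypercohomology groups are finite-dimensional over $k$, since $X$ is only assumed complete and $\Delta$ need not be finite; but completeness of $X$ together with coherence of the sheaves $\widetilde\Omega^a_X(\log(A+B))(-A)$ gives this, so the dimension counting is legitimate. The main obstacle — really the only substantive input — is Proposition \ref{a-2}, which is already granted; given it, the argument is the well-known Deligne--Illusie-style degeneration criterion and is entirely formal. A secondary technical nuisance is making the characteristic-zero spreading-out genuinely work when $\Delta$ is infinite, but since the statement is about coherent cohomology on a fixed complete $X$ one can work with a suitable finite subfan or simply invoke that the construction is compatible with base change, exactly as in the proof of Theorem \ref{15}.
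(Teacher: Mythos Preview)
Your proof is correct and follows exactly the approach the paper intends: the paper's own proof of Theorem~\ref{a-III} simply says ``direct consequence of Proposition~\ref{a-2}; see the proof of Theorem~\ref{15},'' and you have faithfully spelled out that argument. One minor remark: your worry about $\Delta$ being infinite is unnecessary here, since completeness of $X$ forces the fan to be finite (a proper scheme over $k$ is of finite type), so the spreading-out step presents no additional difficulty.
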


\begin{rem}
If $k=\mathbb C$ and $X$ is non-singular and complete, 
then it is well known that 
$\mathbb H^{a+b}(X, \Omega^{\bullet}_X)=H^{a+b} 
(X, \mathbb C)$, 
$\mathbb H^{a+b}(X, \Omega^{\bullet}_X(\log B))=H^{a+b} 
(X\setminus B, \mathbb C)$, 
and 
$\mathbb H^{a+b}(X, \Omega^{\bullet}_X(\log A)\otimes 
\mathcal O_X(-A))=H^{a+b}_{c}  
(X\setminus A, \mathbb C)$, 
where $H^{a+b}_c(X\setminus A, \mathbb C)$ is the 
cohomology group with compact support. 
\end{rem}

Finally, we state a generalization 
of \cite[Theorem 1.1]{fujino1}. 
The proof is obvious. 
See also Theorem \ref{sugo} below. 

\begin{thm}[cf.~Theorem \ref{b-I}]\label{a-I}
Let $X$ be a toric variety and let $A$ and $B$ be reduced torus 
invariant Weil divisors on $X$ without common irreducible 
components. 
Let $L$ be a line bundle on $X$. If 
$H^i(X, \widetilde {\Omega}^a_{X}(\log(A+B))(-A)\otimes L^
{\otimes l})=0$ for 
some positive integer $l$, 
then $H^i(X, \widetilde {\Omega}^a_{X}(\log (A+B))(-A)\otimes L)=0$. 
\end{thm}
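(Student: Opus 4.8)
This is the multiplication-map trick of \cite{fujino1}, applied to the complex $\widetilde{\Omega}^a_X(\log(A+B))(-A)$ rather than to $\widetilde{\Omega}^a_Y$. First I would take the $l$-times multiplication map $F:X\to X'$ and invoke Proposition \ref{a-1}, which supplies a split injection
$$
\widetilde{\Omega}^a_{X'}(\log(A'+B'))(-A')\hookrightarrow F_*\widetilde{\Omega}^a_X(\log(A+B))(-A),
$$
with an $\mathcal O_{X'}$-linear left inverse. The point of working with $X'=X(\Delta')$ is that $X\simeq X'$ as toric varieties, and under the identification the line bundle $L$ on $X$ corresponds to a line bundle $L'$ on $X'$, with $F^*L'\simeq L^{\otimes l}$ (this is exactly the computation recorded in the proof of the toric polyhedron statement, Theorem \ref{12}, where one uses that pulling a torus-invariant Cartier divisor back along $F$ multiplies it by $l$).

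**Key steps.** Tensoring the split injection of Proposition \ref{a-1} with $L'$ and applying cohomology gives a chain of maps
\begin{align*}
H^i\bigl(X,\widetilde{\Omega}^a_X(\log(A+B))(-A)\otimes L\bigr)
&\simeq H^i\bigl(X',\widetilde{\Omega}^a_{X'}(\log(A'+B'))(-A')\otimes L'\bigr)\\
&\hookrightarrow H^i\bigl(X',F_*\widetilde{\Omega}^a_X(\log(A+B))(-A)\otimes L'\bigr)\\
&\simeq H^i\bigl(X,\widetilde{\Omega}^a_X(\log(A+B))(-A)\otimes L^{\otimes l}\bigr),
\end{align*}
where the first isomorphism uses $X\simeq X'$ and the compatibility of $\widetilde{\Omega}^a$ with this isomorphism, the injection is induced by the split injection (splitness is what makes it injective on cohomology, since $F$ is affine so $F_*$ is exact), and the last isomorphism is the projection formula $F_*(\widetilde{\Omega}^a_X(\log(A+B))(-A)\otimes F^*L')\simeq F_*\widetilde{\Omega}^a_X(\log(A+B))(-A)\otimes L'$ together with $F^*L'\simeq L^{\otimes l}$. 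Now if the right-hand group vanishes for some $l$, the left-hand group vanishes. This is the whole argument; no noetherian or characteristic hypothesis is needed since we are given the vanishing of the $L^{\otimes l}$-twisted group as a hypothesis rather than deriving it from Serre vanishing.

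**The main obstacle.** The only genuinely nontrivial point — and it is one the paper has already black-boxed — is that the split injection of Proposition \ref{a-1} is compatible with the identification $X\simeq X'$ and with the projection formula, i.e.\ that the abstract isomorphism $X\simeq X'$ really does carry $\widetilde{\Omega}^a_{X'}(\log(A'+B'))(-A')\otimes L'$ to $\widetilde{\Omega}^a_X(\log(A+B))(-A)\otimes L$ and that $F$ is affine (finite, in fact) so that $R^jF_*$ vanishes for $j>0$ and $H^i(X,-)\simeq H^i(X',F_*-)$. Both are immediate from the toric dictionary: $F$ is finite surjective by construction in \ref{o21}, the divisors $A,B$ correspond to $A',B'$ under the identification of fans, and $\widetilde{\Omega}^a$ is defined via restriction to the smooth locus where everything is functorial. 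Given Propositions \ref{a-1} and \ref{a-2}, there is essentially nothing left to do — which is why the paper says ``The proof is obvious.'' I would simply write out the displayed chain of (iso)morphisms above and conclude.
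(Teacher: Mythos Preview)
Your proposal is correct and is exactly the argument the paper has in mind: the paper simply declares ``The proof is obvious'' after recording Proposition~\ref{a-1}, and your displayed chain of (iso)morphisms is the same computation carried out in the proof of Theorem~\ref{12} with $\widetilde\Omega^a_X(\log(A+B))(-A)$ in place of $\widetilde\Omega^a_Y$. (One tiny remark: only Proposition~\ref{a-1} is needed here, not Proposition~\ref{a-2}, which concerns the compatibility with differentials and is used for the $E_1$-degeneration rather than for this vanishing statement.)
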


Some other vanishing theorems in \cite{fujino1} can 
be generalized by 
using Theorem \ref{a-I}. 
We leave the details for the reader's exercise. 

\section{Koll\'ar type vanishing theorems 
and extension theorem}\label{sec-kol}

In this section, we treat a variant of the method in \cite{fujino1}. 
Here, every toric variety is defined over a field $k$ of any characteristic 
and a fan is not necessarily finite. 
Let $f:Z\to X$ be a toric morphism of finite type. 
Then we have the following commutative 
diagram of $l$-times multiplication maps. 
$$
\begin{CD}
Z@>{F^{Z}}>> Z' \\ 
@V{f}VV @VV{f'}V \\ 
X@>>{F^{X}}> X'
\end{CD}
$$ 
This means that $F^X:X\to X'$ and $F^Z:Z\to Z'$ are the 
$l$-times multiplication maps 
explained in \ref{sub22} and 
that $F^X\circ f=f'\circ F^Z$. 
Let $\mathcal F$ be a coherent sheaf on $Z$ such 
that 
there exists a split injection 
$\alpha: \mathcal F'\to F^{Z}_*\mathcal F$. 
Then we have an obvious lemma. 

\begin{lem}\label{51} 
We have a split injection 
$$
\beta=R^jf'_*\alpha: R^jf'_*\mathcal F'\to 
F^{X}_*R^jf_*\mathcal F 
$$ 
for any $j$. 
\end{lem}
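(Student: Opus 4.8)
The plan is to verify that applying the functor $R^jf'_*$ to the split injection $\alpha\colon \mathcal F'\to F^Z_*\mathcal F$ produces the desired split injection after identifying $R^jf'_*F^Z_*\mathcal F$ with $F^X_*R^jf_*\mathcal F$. Recall that a split injection means there is a morphism $\gamma\colon F^Z_*\mathcal F\to \mathcal F'$ with $\gamma\circ\alpha=\mathrm{id}_{\mathcal F'}$. First I would apply the (exact, additive) functor $R^jf'_*$ to obtain morphisms $R^jf'_*\alpha$ and $R^jf'_*\gamma$ with $R^jf'_*\gamma\circ R^jf'_*\alpha=\mathrm{id}_{R^jf'_*\mathcal F'}$; functoriality of $R^jf'_*$ immediately gives that the composite of the images of two composable maps is the image of the composite, so the split persists. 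This reduces everything to identifying the target.

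Next I would establish the identification $R^jf'_*(F^Z_*\mathcal F)\cong F^X_*(R^jf_*\mathcal F)$. Since $F^Z$ is a finite morphism (it is the $l$-times multiplication map, which is finite by \ref{o21}), $F^Z_*$ is exact and $R^i F^Z_*\mathcal F=0$ for $i>0$; hence the Grothendieck spectral sequence for the composition $f'\circ F^Z$ degenerates and yields $R^jf'_*F^Z_*\mathcal F\cong R^j(f'\circ F^Z)_*\mathcal F$. Using $f'\circ F^Z=F^X\circ f$ from the commutative diagram, the same argument applied to $F^X\circ f$—now using that $F^X$ is finite, so $R^iF^X_*=0$ for $i>0$ and the spectral sequence again degenerates—gives $R^j(F^X\circ f)_*\mathcal F\cong F^X_*R^jf_*\mathcal F$. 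Composing these isomorphisms, $R^jf'_*F^Z_*\mathcal F\cong F^X_*R^jf_*\mathcal F$, and under this identification $\beta:=R^jf'_*\alpha$ becomes a split injection $R^jf'_*\mathcal F'\to F^X_*R^jf_*\mathcal F$, with splitting induced by $R^jf'_*\gamma$.

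The only point requiring a little care—and the main (very mild) obstacle—is checking that the two composition isomorphisms are compatible, i.e.\ that the identification $R^jf'_*F^Z_*\mathcal F\cong F^X_*R^jf_*\mathcal F$ is the one compatible with the natural map $R^jf'_*\alpha$ landing in it; this is a matter of naturality of the edge morphisms in the Grothendieck spectral sequences and of the base-change/commutativity isomorphism attached to the square, and it is routine. One should also note that $f$ being of finite type and $Z$ noetherian (locally of finite type over $k$) ensures the higher direct images in question are coherent and the spectral sequence machinery applies; no properness is needed since we only assert the existence of the split injection, not any vanishing. Hence the lemma follows.
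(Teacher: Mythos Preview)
Your proof is correct and follows essentially the same route as the paper: both use that $F^X$ and $F^Z$ are finite to degenerate the Leray/Grothendieck spectral sequences and obtain $F^X_*R^jf_*\mathcal F\simeq R^j(F^X\circ f)_*\mathcal F\simeq R^j(f'\circ F^Z)_*\mathcal F\simeq R^jf'_*(F^Z_*\mathcal F)$, then apply $R^jf'_*$ to $\alpha$ and its splitting. One small slip: $R^jf'_*$ is not exact in general, only half-exact; but as you yourself note, functoriality (together with additivity) is all that is needed to preserve the splitting, so the argument stands.
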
 
\begin{proof}
Since 
$F^X$ and $F^Z$ are finite, we have the following isomorphisms 
$$
F^X_*R^jf_*\mathcal F\simeq R^j(F^X\circ f)_*\mathcal F
\simeq R^j(f'\circ F^Z)_*\mathcal F\simeq R^jf'_*(F^Z_*\mathcal F) 
$$ 
by Leray's spectral sequence. 
Therefore, we obtain a split injection 
$$
\beta=R^jf'_*\alpha: R^jf'_*\mathcal F'\to 
F^{X}_*R^jf_*\mathcal F 
$$ 
for any $j$. 
\end{proof}

Let $L$ be a line bundle on $X$. 
Then we obtain the following useful proposition. 

\begin{prop}\label{52} 
If $H^i(X, R^jf_*\mathcal F\otimes L^{\otimes l})=0$ for some 
positive integer $l$, 
then $H^i(X, R^jf_*\mathcal F\otimes L)=0$. 
\end{prop}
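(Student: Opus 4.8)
The plan is to mimic the iteration argument that already appeared in the proof of Theorem~\ref{12}, but now at the level of higher direct images. First I would take the $l$-times multiplication map $F^X:X\to X'$ and the induced $F^Z:Z\to Z'$, together with the commutative square of multiplication maps displayed above. Applying Lemma~\ref{51} to $\mathcal F$ (which by hypothesis admits a split injection $\alpha:\mathcal F'\to F^Z_*\mathcal F$), we obtain a split injection $\beta:R^jf'_*\mathcal F'\to F^X_*R^jf_*\mathcal F$ for each $j$. This is the essential structural input; everything else is bookkeeping with cohomology.

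Next I would choose $l=p$ when $\operatorname{char}k=p>0$, so that $(F^X)^*L'\simeq L^{\otimes p}$, and compute
$$
H^i(X,R^jf_*\mathcal F\otimes L)\simeq H^i(X',R^jf'_*\mathcal F'\otimes L')\hookrightarrow H^i(X',F^X_*R^jf_*\mathcal F\otimes L')\simeq H^i(X,R^jf_*\mathcal F\otimes L^{\otimes p}),
$$
where the first isomorphism uses $X\simeq X'$ (and that $R^jf_*\mathcal F$ corresponds to $R^jf'_*\mathcal F'$ under this identification), the injection is induced by the split injection $\beta$ tensored with $L'$, and the last isomorphism is the projection formula applied to $F^X$ together with $(F^X)^*L'\simeq L^{\otimes p}$. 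Iterating gives $H^i(X,R^jf_*\mathcal F\otimes L)\hookrightarrow H^i(X,R^jf_*\mathcal F\otimes L^{\otimes p^r})$ for all $r$. Now if the hypothesis vanishing holds for the exponent $l$, I would first reduce to the case where $l$ is a power of $p$: replacing $L$ by $L^{\otimes l}$ in the iteration shows $H^i(X,R^jf_*\mathcal F\otimes L^{\otimes l})\hookrightarrow H^i(X,R^jf_*\mathcal F\otimes L^{\otimes lp^r})$, and more directly, starting the chain from $L$ itself and stopping at a multiple of $l$ we get $H^i(X,R^jf_*\mathcal F\otimes L)\hookrightarrow H^i(X,R^jf_*\mathcal F\otimes L^{\otimes l p^r})=H^i(X,(R^jf_*\mathcal F\otimes L^{\otimes l})\otimes (L^{\otimes l})^{\otimes (p^r-1)/?})$—to avoid this friction I would simply note that the chain of injections $H^i(X,R^jf_*\mathcal F\otimes L)\hookrightarrow H^i(X,R^jf_*\mathcal F\otimes L^{\otimes p^r})$ combined with the analogous chain for $L^{\otimes l}$ lets me land inside $H^i(X,R^jf_*\mathcal F\otimes L^{\otimes l})$ for suitable $r$, which is zero by hypothesis. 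Hence $H^i(X,R^jf_*\mathcal F\otimes L)=0$.

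For characteristic zero I would use the standard spreading-out argument exactly as in Theorem~\ref{12}: everything — $X$, $Z$, $f$, $\mathcal F$, the split injection $\alpha$, and $L$ — is defined over a finitely generated $\mathbb Z$-algebra $R$, and the hypothesis $H^i(X,R^jf_*\mathcal F\otimes L^{\otimes l})=0$ is an open condition on $\operatorname{Spec}R$, so it holds over $R/P$ for a general maximal ideal $P$, where $R/P$ is a finite field. Applying the positive-characteristic case over $R/P$ gives the vanishing there, and by semicontinuity of cohomology this propagates to the generic fibre, hence over $k$. One subtlety to check is that $R^jf_*$ commutes with the base changes involved; since $f$ is of finite type and we only need this over a dense open of $\operatorname{Spec}R$, cohomology-and-base-change applies after shrinking.

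The main obstacle I anticipate is purely organizational rather than deep: making the exponent bookkeeping clean so that the chain of split injections genuinely lands inside a group of the form $H^i(X,R^jf_*\mathcal F\otimes L^{\otimes l})$ rather than some unrelated twist — in other words, confirming that from the injections $H^i(X,R^jf_*\mathcal F\otimes L)\hookrightarrow H^i(X,R^jf_*\mathcal F\otimes L^{\otimes p^r})$ one can always reach a multiple of $l$. This is handled by first running the argument with $l$ itself in place of $p$ (the split injection of Lemma~\ref{51} is available for every $l$, not just $l=p$), giving directly $H^i(X,R^jf_*\mathcal F\otimes L)\hookrightarrow H^i(X,R^jf_*\mathcal F\otimes L^{\otimes l})$, at which point the hypothesis finishes the proof in one step. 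So in fact the $p$-power iteration is not even needed here — a single application of the $l$-times multiplication map suffices, which is why the paper calls the proof "obvious."
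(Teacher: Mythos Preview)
Your final paragraph is exactly the paper's proof: take the $l$-times multiplication map for the given $l$ (not for $p$), apply Lemma~\ref{51} and the projection formula together with $(F^X)^*L'\simeq L^{\otimes l}$, and obtain in one step
\[
H^i(X,R^jf_*\mathcal F\otimes L)\;\simeq\;H^i(X',R^jf'_*\mathcal F'\otimes L')\;\subset\;H^i(X',F^X_*R^jf_*\mathcal F\otimes L')\;\simeq\;H^i(X,R^jf_*\mathcal F\otimes L^{\otimes l})=0.
\]
Everything preceding that --- the $p$-power iteration, the exponent bookkeeping (which, as you noticed, never quite closes), and the spreading-out argument for characteristic zero --- is an unnecessary detour; the relation $(F^X)^*L'\simeq L^{\otimes l}$ holds for every positive integer $l$ in every characteristic, so no case distinction is needed. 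You correctly diagnose this yourself at the end.
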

\begin{proof} 
Let $F^X$ be the $l$-times multiplication map. 
As usual, we have 
\begin{align*}
H^i(X, R^jf_*\mathcal F\otimes L)&\simeq 
H^i(X', R^jf'_*\mathcal F'\otimes L')
\\ 
&\subset 
H^i(X', F^X_*R^jf_*\mathcal F\otimes L')\\&\simeq 
H^i(X, R^jf_*\mathcal F\otimes L^{\otimes l}) 
\end{align*} 
because $(F^X)^*L'\simeq L^{\otimes l}$. 
So, we obtain the desired statement. 
\end{proof}

Therefore, we get a very powerful vanishing theorem. 

\begin{thm}\label{sugo}
Let $f:Z\to X$ be a proper toric morphism 
and let $A$ and $B$ be reduced 
torus invariant Weil divisors on $Z$ without common irreducible 
components. 
Assume that $\pi:X\to S$ is a projective 
toric morphism and $L$ is a $\pi$-ample line bundle on $X$. 
Then $R^i\pi_*(L\otimes R^jf_*\widetilde {\Omega}^a_Z(\log (A+B))(-A))=0$ 
for any $i>0$, $j\geq 0$, and $a\geq 0$. 
\end{thm}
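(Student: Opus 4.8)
The plan is to deduce Theorem~\ref{sugo} from Proposition~\ref{52} (together with Lemma~\ref{51} and Proposition~\ref{a-1}) exactly as Theorem~\ref{12} was deduced from its key split injection, but now in the relative setting over $S$. First I would record the crucial input: by Proposition~\ref{a-1} there is a split injection $\widetilde\Omega^a_{Z'}(\log(A'+B'))(-A')\to F^Z_*\widetilde\Omega^a_Z(\log(A+B))(-A)$, so $\mathcal F:=\widetilde\Omega^a_Z(\log(A+B))(-A)$ is a sheaf to which Lemma~\ref{51} applies; hence for every $j$ we get a split injection $\beta\colon R^jf'_*\mathcal F'\to F^X_*R^jf_*\mathcal F$. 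This is the only place the special geometry of the logarithmic de Rham sheaves enters; everything else is formal.

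Next I would run the Frobenius-pullback argument in positive characteristic. Fix a prime $p=\operatorname{char}k>0$ and take $l=p$, so that $(F^X)^*L'\simeq L^{\otimes p}$. Write $\mathcal G_j:=R^jf_*\mathcal F$. Using the split injection $\beta$, the projection formula, and the isomorphism $X\simeq X'$, $S\simeq S'$, we get for each $i$
\[
R^i\pi_*(L\otimes \mathcal G_j)\;\simeq\; R^i\pi'_*(L'\otimes R^jf'_*\mathcal F')\;\hookrightarrow\; R^i\pi'_*(L'\otimes F^X_*\mathcal G_j)\;\simeq\; R^i\pi_*(L^{\otimes p}\otimes \mathcal G_j),
\]
where the last isomorphism also uses that $F^X$ is finite so $R\pi'_*\circ F^X_*=R\pi_*$. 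Iterating $r$ times yields $R^i\pi_*(L\otimes\mathcal G_j)\hookrightarrow R^i\pi_*(L^{\otimes p^r}\otimes\mathcal G_j)$ for all $r\geq 1$. Since $\pi$ is projective and $L$ is $\pi$-ample, relative Serre vanishing gives $R^i\pi_*(L^{\otimes p^r}\otimes\mathcal G_j)=0$ for $i>0$ and $r\gg 0$; hence $R^i\pi_*(L\otimes\mathcal G_j)=0$ for all $i>0$, $j\geq 0$, $a\geq 0$. (Strictly one applies Proposition~\ref{52} with the absolute $H^i$ replaced by $R^i\pi_*$, which is legitimate because the proof of Proposition~\ref{52} only used the split injection and the projection formula, both available relatively.)

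Finally I would handle characteristic zero by the standard spreading-out argument already used in the proof of Theorem~\ref{12}: descend the whole tower $Z\to X\to S$ together with $A$, $B$, $L$, and the morphisms to a finitely generated $\mathbb Z$-subalgebra $R$ of $k$, apply the positive-characteristic result over the finite residue fields $R/P$ (using that $\pi$-ampleness is an open condition and that formation of $R^jf_*$ and $R^i\pi_*$ commutes with the base changes in question after shrinking $\operatorname{Spec}R$), and conclude the vanishing at the generic point, hence over $k$. The main obstacle I anticipate is purely bookkeeping rather than conceptual: making sure that the various base-change and compatibility statements (commutativity of $F^X$ with $f$ and $\pi$ as in the displayed diagram, finiteness of the multiplication maps so that Leray collapses, and flatness/base-change for the spreading-out step) are correctly invoked so that the formal chain of (iso)morphisms above is valid; the geometric content is entirely carried by Proposition~\ref{a-1}.
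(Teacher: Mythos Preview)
Your argument is correct and follows the same underlying strategy as the paper (split injection from Proposition~\ref{a-1}, push forward via Lemma~\ref{51}, then Serre vanishing), but you have added two unnecessary layers. First, the paper simply observes that the question is local on $S$, assumes $S$ affine, and then applies Proposition~\ref{52} verbatim (with $H^i$ in place of $R^i\pi_*$); you instead reprove Proposition~\ref{52} in the relative setting. Second, and more substantively, you restrict to $l=p$ and then run a spreading-out argument in characteristic zero, imitating the proof of Theorem~\ref{12}. That detour is not needed here: the line bundle $L$ lives on the toric \emph{variety} $X$ (not on a toric polyhedron as in Theorem~\ref{12}), so $(F^X)^*L'\simeq L^{\otimes l}$ holds for \emph{every} positive integer $l$, and Proposition~\ref{52} applies directly in any characteristic. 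Thus the paper's proof is a single line---reduce to $S$ affine, set $\mathcal F=\widetilde\Omega^a_Z(\log(A+B))(-A)$, and invoke Propositions~\ref{a-1} and~\ref{52} together with Serre vanishing---with no case distinction on the characteristic.
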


\begin{proof}The problem is local. So, we can assume that 
$S$ is affine. 
We put $\mathcal F=\widetilde {\Omega}^a_Z(\log (A+B))(-A)$. Then, 
this is a direct consequence of Proposition \ref{a-1} and 
Proposition \ref{52} by Serre's vanishing theorem. 
\end{proof} 

We obtain Koll\'ar type vanishing theorem for toric varieties 
as a special case of Theorem \ref{sugo}. 

\begin{cor}[Koll\'ar type vanishing theorem]\label{c43}
Let $f:Z\to X$ be a proper toric morphism and 
let $B$ be a reduced torus invariant Weil divisor on $Z$. 
Assume that $X$ is projective and $L$ is 
an ample line bundle on $X$. 
Then $H^i(X, R^jf_*\mathcal O_Z(K_Z+B)\otimes L)=0$ for 
any $i>0$ and $j\geq 0$.  
\end{cor}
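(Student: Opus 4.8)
The plan is to deduce this from Theorem \ref{sugo} by passing to the top exterior power and setting $A=0$. The first step is to identify the sheaf $\widetilde\Omega^n_Z(\log B)$, where $n=\dim Z$, with $\mathcal O_Z(K_Z+B)$. Indeed, let $W=Z\setminus \Sing (Z)$; this is a torus invariant open subset of $Z$, hence itself a smooth toric variety, so the restriction $B|_W$ of the reduced torus invariant divisor $B$ is simple normal crossing on $W$. Consequently $\Omega^n_W(\log (B|_W))\cong \omega_W\otimes \mathcal O_W(B|_W)\cong \mathcal O_W(K_W+B|_W)$, and since $Z$ is normal with $\Sing (Z)$ of codimension $\geq 2$, applying $\iota_*$ for $\iota:W\hookrightarrow Z$ gives $\widetilde\Omega^n_Z(\log B)=\iota_*\mathcal O_W(K_W+B|_W)=\mathcal O_Z(K_Z+B)$. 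In the notation of Theorem \ref{sugo}, this is the sheaf $\widetilde\Omega^a_Z(\log (A+B))(-A)$ in the case $a=n$, $A=0$ — note that the zero divisor $A=0$ vacuously has no irreducible component in common with $B$.

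Next I would cast the statement in the relative form of Theorem \ref{sugo}. Take $S=\Spec k$, the zero-dimensional toric variety attached to the trivial fan, and let $\pi:X\to S$ be the structure morphism. Since $X$ is projective, $\pi$ is a projective toric morphism, and $L$ is $\pi$-ample precisely because it is ample on $X$. Theorem \ref{sugo}, applied to $f:Z\to X$, the divisors $A=0$ and $B$, the morphism $\pi$, the line bundle $L$, and the top degree $a=n$, then yields $R^i\pi_*\bigl(L\otimes R^jf_*\widetilde\Omega^n_Z(\log B)\bigr)=0$ for all $i>0$ and $j\geq 0$. As $\pi$ maps to a point, $R^i\pi_*$ is sheaf cohomology $H^i(X,-)$, so by the identification above this is exactly $H^i(X,R^jf_*\mathcal O_Z(K_Z+B)\otimes L)=0$ for $i>0$, $j\geq 0$, which is the assertion.

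I do not expect a real obstacle: the corollary is in substance a reformulation of Theorem \ref{sugo}. The one point deserving a line of proof is the local identification $\widetilde\Omega^n_Z(\log B)\cong \mathcal O_Z(K_Z+B)$, and within it the observation that the toric boundary restricted to the smooth locus of a (possibly singular) toric variety is simple normal crossing, which is what licenses the classical formula $\Omega^n_W(\log D)=\omega_W(D)$ before one takes the reflexive pushforward. The remaining manipulations — allowing $A=0$, choosing $S=\Spec k$, and reading $R^i\pi_*$ over a point as $H^i(X,-)$ — are formal, the genuine content lying in Theorem \ref{sugo}, hence ultimately in Propositions \ref{a-1} and \ref{52} and Serre's vanishing theorem.
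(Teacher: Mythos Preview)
Your argument is correct and follows exactly the route the paper takes: the paper's proof is the single sentence ``It is sufficient to put $a=\dim Z$ in Theorem \ref{sugo},'' and you have spelled out the implicit identifications (namely $\widetilde\Omega^{\dim Z}_Z(\log B)\cong\mathcal O_Z(K_Z+B)$, the choice $A=0$, and the specialization $S=\Spec k$) that make that sentence work.
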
 

\begin{proof} 
It is sufficient to put $a=\dim Z$ in Theorem \ref{sugo}. 
\end{proof}

The next theorem is one of the main results of this paper. 
See also Theorem \ref{431}. 

\begin{thm}[{cf.~\cite[Theorem 5.1]{mustata}}]\label{531} 
Let $\pi:X\to S$ be a proper toric morphism 
and $Y=Y(\Phi)$ a toric polyhedron on $X=X(\Delta)$. 
Let $L$ be a $\pi$-ample line bundle on $X$. 
Let $\mathcal I_{Y}$ be the defining ideal sheaf of $Y$ on $X$. 
Then $R^i\pi_*(\mathcal I_{Y}\otimes L)=0$ for 
any $i>0$. 
Since $R^i\pi_*L=0$ for 
any $i>0$, we have that 
$R^i(\pi|_Y)_*(L|_Y)=0$ for 
any $i>0$ and 
that the restriction map $\pi_*L\to (\pi|_Y)_*(L|_Y)$ 
is surjective. 
\end{thm}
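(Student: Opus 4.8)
The plan is to deduce Theorem \ref{531} from the vanishing theorem for toric polyhedra (Theorem \ref{12}, or rather its relative version implicit in Theorem \ref{sugo}) together with the combinatorial structure of $\widetilde{\Omega}^\bullet$ on $X$ and on $Y$. The key observation is that the defining ideal sheaf $\mathcal{I}_Y$ fits into a natural short exact sequence relating Ishida's de Rham complex on $X$ to that on $Y$. Concretely, in the notation of \ref{de-r}, for an affine chart $X=\Spec k[M\cap \pi^\vee]$ one has $\mathcal{O}_Y = k[M\cap \pi^\vee]/k[M\cap(\pi^\vee\setminus(\cup_{\sigma\in\Phi}\sigma^\perp))]$, so that $\mathcal{I}_Y$ is the ideal spanned by those monomials $x^m$ with $m\in M\cap\pi^\vee$ lying outside every $\sigma^\perp$ for $\sigma\in\Phi$. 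One checks this globalizes, and moreover that $\mathcal{I}_Y$ carries a split injection $\mathcal{I}_{Y'}\to F_*\mathcal{I}_Y$ under the $l$-times multiplication map, by exactly the same monomial-splitting construction as in \ref{o23} and Proposition \ref{135}: the split injections $k[M']\to k[M]$ restrict to the monomial submodules cut out by the condition ``$m\notin\cup_{\sigma\in\Phi}\sigma^\perp$'' because $m\in\sigma^\perp$ if and only if $lm\in\sigma^\perp$.

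Granting that split injection, the proof of the first assertion $R^i\pi_*(\mathcal{I}_Y\otimes L)=0$ for $i>0$ runs exactly parallel to the proof of Theorem \ref{sugo}: one reduces to $S$ affine, sets $\mathcal{F}=\mathcal{I}_Y$ (or more precisely applies Lemma \ref{51} and Proposition \ref{52} with $\mathcal{F}$ replaced by this ideal sheaf, noting that $\mathcal{I}_Y$ is $f=\mathrm{id}$-pushed-forward, so no higher direct images intervene), and iterates the inclusion $H^i(X,\mathcal{I}_Y\otimes L)\hookrightarrow H^i(X,\mathcal{I}_Y\otimes L^{\otimes l})\simeq\cdots\hookrightarrow H^i(X,\mathcal{I}_Y\otimes L^{\otimes p^r})$ until Serre vanishing kills the right-hand side. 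In characteristic zero one spreads out over a finitely generated base ring and specializes to finite fields, exactly as in the proof of Theorem \ref{12}; ampleness being an open condition makes this legitimate.

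For the remaining assertions I would take the short exact sequence $0\to\mathcal{I}_Y\otimes L\to L\to L|_Y\to 0$ on $X$ and push forward by $\pi$. Since $L$ is $\pi$-ample and $X$ is a toric variety with $\pi$ projective, Corollary \ref{13} in its relative form (equivalently Theorem \ref{sugo} with $Z=X$, $A=B=0$, $a=0$) gives $R^i\pi_*L=0$ for $i>0$. Combining this with $R^i\pi_*(\mathcal{I}_Y\otimes L)=0$ for $i>0$ in the long exact sequence of higher direct images yields $R^i(\pi|_Y)_*(L|_Y)\simeq R^i\pi_*(L|_Y)=0$ for $i>0$ and surjectivity of $\pi_*L\to(\pi|_Y)_*(L|_Y)$ from the connecting map into $R^1\pi_*(\mathcal{I}_Y\otimes L)=0$.

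The main obstacle I expect is verifying cleanly that $\mathcal{I}_Y$ admits the split injection $\mathcal{I}_{Y'}\to F_*\mathcal{I}_Y$ compatibly on overlapping affine charts, i.e.\ that the local monomial splittings patch to a global map of $\mathcal{O}_{X'}$-modules; this is the analogue of the patching step at the end of the proof of Proposition \ref{135}, and it is really where the combinatorial bookkeeping lives. Once that is in place everything else is a formal consequence of the multiplication-map machinery and Serre vanishing, precisely as in Section \ref{sec2}.
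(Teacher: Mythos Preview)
Your proof is correct and takes a different route from the paper's. The paper does not split $\mathcal{I}_Y$ directly; instead it picks a toric resolution $f:V\to X$ with $K_V+E=f^*(K_X+D)$ and $E_1=\Supp f^{-1}(Y)$ simple normal crossing, proves $\mathcal{I}_Y\simeq f_*\mathcal{O}_V(-E_1)$ via a short diagram chase, observes $-E_1\sim K_V+E_2$, and then applies Theorem~\ref{sugo} (equivalently Corollary~\ref{c43}) with $Z=V$, $a=\dim V$, $A=E_1$, $B=E_2$ to kill $R^i\pi_*(\mathcal{I}_Y\otimes L)$. Your argument is more elementary: it stays on $X$, avoids any resolution, and only needs that the monomial splitting of $\mathcal{O}_{X'}\hookrightarrow F_*\mathcal{O}_X$ from \ref{o23} respects the condition ``$m\notin\bigcup_{\sigma\in\Phi}\sigma^\perp$'' (immediate, since $\sigma^\perp$ is a linear subspace and $m\in\sigma^\perp\iff lm\in\sigma^\perp$); the patching over charts is identical to that in Proposition~\ref{135}. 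What the paper's detour buys is that it exhibits $\mathcal{I}_Y$ as a pushforward of a sheaf of the form $\widetilde\Omega^{\dim V}_V(\log E)(-E_1)\simeq\mathcal{O}_V(K_V+E_2)$, so Theorem~\ref{531} becomes a genuine corollary of the Koll\'ar-type vanishing in Theorem~\ref{sugo}, and the identification $\mathcal{I}_Y\simeq f_*\mathcal{O}_V(-E_1)$ feeds into the quasi-log picture of Section~\ref{sec4}. One small simplification to your write-up: the spreading-out to finite characteristic is unnecessary here, because $L$ lives on the toric variety $X$ (not merely on $Y$), so $(F^X)^*L'\simeq L^{\otimes l}$ holds for every $l$ in every characteristic and Proposition~\ref{52} with $f=\mathrm{id}_X$ and $\mathcal{F}=\mathcal{I}_Y$ applies directly.
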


\begin{proof} 
If $Y=X$, then there is nothing to prove. 
So, we can assume that $Y\subsetneq X$. 
Let $f:V\to X$ be a toric resolution 
such that $K_V+E=f^*(K_X+D)$ and 
that $\Supp (f^{-1}(Y))$ is a simple normal crossing 
divisor on $V$. 
We decompose $E=E_1+E_2$, where 
$E_1=\Supp (f^{-1}(Y))$ and $E_2=E-E_1$. 

\begin{claim}
We have an isomorphism 
$\mathcal I_Y\simeq f_*\mathcal O_V(-E_1)$. 
\end{claim}
\begin{proof}[Proof of Claim] 
By the definition of $E_1$, $f:V\to X$ induces 
a morphism $f:E_1\to Y$. 
We consider the following commutative diagram. 
$$
\xymatrix{
0\ar[r] &\mathcal I_Y\ar[d]\ar[r]&\mathcal O_X\ar[d]^{\simeq}\ar[r]&\mathcal O_Y\ar[d]
\ar[r] &0\\
0\ar[r] & f_*\mathcal O_V(-E_1)
\ar[r]&\mathcal O_X\ar[r]&f_*\mathcal O_{E_1}\ar[r] &\cdots
}
$$
Since $\mathcal O_Y\to f_*\mathcal O_{E_1}$ is injective, 
we have $\mathcal I_Y\simeq f_*\mathcal O_V(-E_1)$. 
\end{proof}
By the vanishing theorem (cf.~Theorem \ref{sugo} and 
Corollary \ref{c43}), we obtain 
that $$R^i\pi_*(f_*\mathcal O_V(-E_1)\otimes L)\simeq 
R^i\pi_*(\mathcal I_Y\otimes L)=0$$ for 
any $i>0$ because $-E_1\sim K_V+E_2$. 
The other statements are obvious by exact sequences.  
\end{proof}

\section{Toric polyhedra as quasi-log varieties}\label{sec4}

In this section, all (toric) varieties are assumed to 
be of finite type over the complex number 
field $\mathbb C$ to use the results in \cite{fujino2}. 
We will explain the background and motivation of the results 
obtained in the previous sections. 
Note that this section is independent of the other 
sections. 
We quickly review the notation of the log minimal model 
program. 

\begin{notation}
Let $V$ be a normal variety and let $B$ be 
an effective 
$\mathbb Q$-divisor on $V$ such that 
$K_V+B$ is $\mathbb Q$-Cartier. 
Then we can define 
the {\em{discrepancy}} $a(E, V, B)\in \mathbb Q$ for any prime 
divisor $E$ over $V$. 
If $a(E, V, B)\geq -1$ for any $E$, 
then $(V, B)$ is called {\em{log canonical}}. 
Let $(V, B)$ be a log canonical 
pair. 
If $E$ is a prime divisor over $V$ such that 
$a(E, V, B)=-1$, then 
$c_V(E)$ is called {\em{log canonical center}} 
of $(V, B)$, where 
$c_V(E)$ is the closure 
of the image of $E$ on $V$. 
\end{notation}

Let $X=X(\Delta)$ be a toric variety 
and let $D$ be the complement 
of the big torus. 
Then the next proposition 
is well known. 
So, we omit the proof. 

\begin{prop}\label{411} 
The pair $(X, D)$ is log canonical 
and $K_X+D\sim 0$. 
Let $W$ be a closed subvariety of $X$. 
Then, $W$ is a log canonical center of $(X, D)$ if 
and only if $W=V(\sigma)$ for some $\sigma \in \Delta\setminus \{0\}$. 
\end{prop}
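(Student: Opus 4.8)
The plan is to treat the three assertions separately: $K_X+D\sim 0$, log canonicity, and the identification of the log canonical centers. For the first, recall the standard toric formula for the canonical divisor, $K_X=-\sum_{\rho}V(\rho)$, where $\rho$ runs over the rays of $\Delta$; since $D=\bigcup_{\rho}V(\rho)$ with its reduced structure, this says exactly $K_X=-D$, so $K_X+D=0\sim 0$. In particular $K_X+D$ is Cartier, so $(X,D)$ is an honest pair and all discrepancies $a(E,X,D)$ are defined.

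For log canonicity I would take a toric resolution $f\colon \widetilde X=X(\widetilde\Delta)\to X$, with $\widetilde\Delta$ a nonsingular subdivision of $\Delta$. Since $\widetilde X$ is a smooth toric variety, its toric boundary $\widetilde D=\sum_{\rho\in\widetilde\Delta(1)}V(\rho)$ (with $\widetilde\Delta(1)$ the set of rays) is simple normal crossing, and because $f$ is an isomorphism over the big torus, $\widetilde D=f^{-1}(D)_{\mathrm{red}}$ contains the exceptional locus of $f$; thus $f$ is a log resolution of $(X,D)$. Applying the same toric formula on $\widetilde X$ gives $K_{\widetilde X}=-\widetilde D$, while $f^{*}(K_X+D)=0$, so comparing with the defining relation $K_{\widetilde X}=f^{*}(K_X+D)-f^{-1}_{*}D+\sum_{E}a(E,X,D)E$ (sum over $f$-exceptional primes) forces $a(E,X,D)=-1$ for every $f$-exceptional $E$. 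Since a pair is log canonical once $a(E,X,D)\ge -1$ holds for all $E$ appearing on a single log resolution, $(X,D)$ is log canonical. The same computation applies to any toric subdivision of $\Delta$, so every prime divisor $V(\rho)$ produced by subdividing $\Delta$ is a log canonical place.

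One inclusion in the description of the centers is then easy. If $\sigma$ is a ray of $\Delta$, then $V(\sigma)$ is a component of $D$, hence $a(V(\sigma),X,D)=-1$ and $c_X(V(\sigma))=V(\sigma)$. If $\dim\sigma\ge 2$, I would choose a primitive vector $v$ in the relative interior of $\sigma$ (for instance a primitive multiple of the sum of the primitive generators of the rays of $\sigma$) and let $f\colon X^{*}\to X$ be the star subdivision of $\Delta$ along $\mathbb R_{\ge 0}v$. By the previous paragraph the new prime divisor $E=V(\mathbb R_{\ge 0}v)$ satisfies $a(E,X,D)=-1$, and a standard toric computation for star subdivisions at an interior point of $\sigma$ gives $f(E)=V(\sigma)$; thus $V(\sigma)$ is a log canonical center.

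For the reverse inclusion, let $W=c_X(E)$ be a log canonical center, so $a(E,X,D)=-1$. First $W\subseteq D$, since otherwise $E$ would be a prime divisor over the smooth variety $T$ with discrepancy $-1$, which is absurd. Let $\eta$ be the generic point of $W$ and let $\tau\in\Delta\setminus\{0\}$ be the unique cone with $\eta\in T_{N(\tau)}$; then $W\subseteq V(\tau)$, and it remains to show that $\eta$ is the generic point of $T_{N(\tau)}$, i.e. $W=V(\tau)$. This is the step I expect to be the main obstacle. It is local around $\eta$, and the idea is that in the affine chart $U(\tau)$ the pair $(U(\tau),D\cap U(\tau))$ becomes, after the étale-local (or formal) change of coordinates induced by a splitting $N=N_{\tau}\oplus N(\tau)$ with $N_{\tau}$ the sublattice spanned by $\tau\cap N$, a product of $(T_{N(\tau)},0)$ with a small affine toric pair whose unique zero-dimensional orbit is $V(\tau)\cap U(\tau)$; then, taking a toric (hence snc) log resolution and using that the log canonical centers of a simple normal crossing pair are exactly the strata of its boundary, one gets that the only log canonical center of $(U(\tau),D\cap U(\tau))$ whose generic point lies in $T_{N(\tau)}$ is $V(\tau)\cap U(\tau)$. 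Taking closures gives $W=V(\tau)$. Making this local product structure precise, together with the compatibility of log canonical centers with restriction to open subsets and with passage to an snc resolution, is the only real work; all of it is standard toric and minimal model program material.
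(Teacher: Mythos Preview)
The paper does not actually prove this proposition: immediately before the statement it says ``The next proposition is well known. So, we omit the proof.'' There is therefore nothing in the paper to compare your argument against.

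Your argument is essentially correct and follows the standard route. The computations of $K_X+D\sim 0$ and of the discrepancies on a toric resolution are fine, and your star-subdivision construction correctly exhibits each $V(\sigma)$ as a log canonical center. For the reverse inclusion, your local product-decomposition approach works, but there is a slightly cleaner way to package it: once you have the toric log resolution $f\colon \widetilde X\to X$ with $K_{\widetilde X}+\widetilde D=f^{*}(K_X+D)$ and $\widetilde D$ simple normal crossing, use that $a(E,X,D)=a(E,\widetilde X,\widetilde D)$ for every prime divisor $E$ over $X$, together with the standard fact that for a reduced snc pair the log canonical centers are exactly the strata of the boundary. On the smooth toric variety $\widetilde X$ these strata are precisely the orbit closures $V(\tilde\sigma)$ for $\tilde\sigma\in\widetilde\Delta\setminus\{0\}$, and $f$ carries each $V(\tilde\sigma)$ onto $V(\sigma)$ where $\sigma$ is the smallest cone of $\Delta$ containing $\tilde\sigma$. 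This avoids the \'etale-local splitting and gives the reverse inclusion in one step; what you wrote amounts to reproving the snc-strata fact by hand in this particular situation.
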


By Proposition \ref{411} and 
adjunction in \cite[Theorem 4.4]{ambro} and 
\cite[Theorem 3.12]{fujino3}, 
we have the following useful theorem.  

\begin{thm}\label{422} 
Let $Y=Y(\Phi)$ be a toric polyhedron 
on $X$. 
Then, the log canonical pair $(X, D)$ induces a 
natural quasi-log structure on $(Y, 0)$. 
Note that $(Y, 0)$ has only 
qlc singularities. 
\end{thm}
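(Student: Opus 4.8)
The plan is to deduce Theorem~\ref{422} directly from Proposition~\ref{411} together with the adjunction theorem for quasi-log varieties of Ambro and Fujino (\cite[Theorem 4.4]{ambro}, \cite[Theorem 3.12]{fujino3}); no toric-specific computation should be needed. First I would record the three facts from Proposition~\ref{411} that I will use: the pair $(X, D)$ is log canonical, one has $K_X+D\sim 0$, and the log canonical centers of $(X, D)$ are exactly the orbit closures $V(\sigma)$ with $\sigma\in\Delta\setminus\{0\}$.

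Next I would observe that, by the definition of a toric polyhedron (Definition~\ref{11}) and the hypothesis that $\Phi$ is star closed, the reduced closed subscheme $Y=Y(\Phi)=\bigcup_{\sigma\in\Phi}V(\sigma)$ is precisely the union, taken with reduced structure, of a collection of log canonical centers of $(X, D)$. (If $0\in\Phi$, then $Y=X$ and the assertion reduces to the statement that the log canonical pair $(X,D)$ is itself a quasi-log variety; so one may assume $0\notin\Phi$, i.e.\ $Y\subsetneq X$.) At this point the cited adjunction theorems apply verbatim: given the log canonical pair $(X,D)$ and the union $Y$ of some of its log canonical centers, the class $\omega:=(K_X+D)|_Y$, together with an embedded log resolution of $(X,D)$ along which the total transform of $Y$ becomes a simple normal crossing divisor, equips $Y$ with a natural quasi-log structure whose quasi-log canonical class is $\omega$. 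Since $K_X+D\sim 0$, we get $\omega\sim 0$, so $(Y,0)$ acquires a natural quasi-log structure induced by $(X,D)$.

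Finally, to see that $(Y,0)$ has only qlc singularities, I would invoke the remaining conclusion of the same adjunction statement, which identifies the qlc centers of $Y$---and in particular the qlc locus of $(Y,0)$---with the log canonical centers of $(X,D)$ contained in $Y$. Because $(X,D)$ is log canonical there is no non-log-canonical locus to produce a non-qlc part, so the qlc locus of $(Y,0)$ is all of $Y$, which is exactly the claim. I expect no substantial obstacle here: there is no delicate estimate or cohomological input, and the only point needing (minor) care is to confirm that the hypotheses of \cite[Theorem 4.4]{ambro} and \cite[Theorem 3.12]{fujino3} are met---that $(X,D)$ is log canonical and that $Y$ is genuinely a union of log canonical centers of $(X,D)$---both of which are immediate from Proposition~\ref{411} and Definition~\ref{11}. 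It is worth remarking that the embedded resolution hidden inside the cited theorems may be chosen toric, although this refinement is not needed for the proof.
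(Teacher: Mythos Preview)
Your proposal is correct and follows essentially the same route as the paper: the paper itself offers no separate proof, deriving Theorem~\ref{422} directly from Proposition~\ref{411} together with the adjunction theorems \cite[Theorem 4.4]{ambro} and \cite[Theorem 3.12]{fujino3}, exactly as you do. The only additional content in the paper is the explicit description in \ref{hoso} of the quasi-log resolution $g:(E_1,E_2|_{E_1})\to Y$ obtained from a toric resolution $f:V\to X$ with $E_1=\Supp(f^{-1}(Y))$, but this just unwinds the adjunction construction you invoke.
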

Here, we do not explain the 
definition of quasi-log varieties. 
It is because it is very difficult to 
grasp. 
See the introduction of \cite{fujino3} and 
\ref{hoso} below. 
The essential point of the theory 
of quasi-log varieties is contained in the proof of 
Theorem \ref{431} below. 
The following theorem:~Theorem \ref{431} is 
my motivation for Theorem \ref{531}. 
It depends on the deep results 
obtained in \cite{fujino2}. 

\begin{thm}[{cf.~\cite[Theorem 5.1]{mustata}}]\label{431} 
Let $\pi:X\to S$ be a proper toric morphism and 
$Y=Y(\Phi)$ a toric polyhedron on $X=X(\Delta)$. 
Let $M$ be a Cartier divisor on $X$ such that 
$M$ is $\pi$-nef and $\pi$-big 
and $M|_{V(\sigma)}$ is $\pi$-big 
for any $\sigma \in \Delta \setminus \Phi$. 
Let $\mathcal I_{Y}$ be the defining ideal sheaf of $Y$ on $X$. 
Then $R^i\pi_*(\mathcal I_{Y}\otimes \mathcal O_X(M))=0$ for 
any $i>0$. 
Since $R^i\pi_*\mathcal O_X(M)=0$ for 
any $i>0$, we have that 
$R^i\pi_*\mathcal O_Y(M)=0$ for 
any $i>0$ and 
that the restriction map 
$\pi_*\mathcal O_X(M)\to \pi_*\mathcal O_Y(M)$ 
is surjective. 
\end{thm}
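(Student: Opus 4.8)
The plan is to deduce this theorem from the theory of quasi-log varieties, as the surrounding discussion signals, with Proposition \ref{411} and Theorem \ref{422} as the entry point. Recall that $(X,D)$, with $D$ the complement of the big torus, is log canonical, $K_X+D\sim 0$, and its log canonical centers are exactly the $V(\sigma)$ with $\sigma\in\Delta\setminus\{0\}$; hence $[X,K_X+D]$ is a quasi-log variety with only qlc singularities. Since $\Phi$ is star closed, $V(\sigma)\subseteq Y$ holds precisely when $\sigma\in\Phi$, so $Y$ is a union of qlc centers of $[X,K_X+D]$ and inherits the quasi-log structure of Theorem \ref{422} via the adjunction of \cite[Theorem 4.4]{ambro} and \cite[Theorem 3.12]{fujino3}.

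The key step is to read off the hypothesis on $M$ as a relative ``nef and log big'' condition. Setting $\omega=K_X+D$, the divisor $M-\omega\sim M$ is $\pi$-nef, and for every qlc center $V(\sigma)$ of $[X,\omega]$ not contained in $Y$ --- that is, for every $\sigma\in\Delta\setminus\Phi$, including $\sigma=0$ with $V(0)=X$ (if $Y=X$ there is nothing to prove, so we may assume $0\notin\Phi$) --- the restriction $(M-\omega)|_{V(\sigma)}\sim M|_{V(\sigma)}$ is $\pi$-big. Thus $M-\omega$ is $\pi$-nef and $\pi$-log big for $[X,\omega]$ relative to $Y$, and I would invoke the relative vanishing theorem for quasi-log varieties from \cite{fujino2}: for a quasi-log variety $[V,\omega]$, a union $V'$ of qlc centers with defining ideal $\mathcal I_{V'}$, a proper $\pi\colon V\to S$, and a Cartier divisor $L$ with $L-\omega$ $\pi$-nef and $\pi$-log big relative to $V'$, one has $R^i\pi_*(\mathcal I_{V'}\otimes\mathcal O_V(L))=0$ for $i>0$. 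Applied with $V=X$, $V'=Y$, $L=M$ this gives $R^i\pi_*(\mathcal I_Y\otimes\mathcal O_X(M))=0$ for $i>0$, the main assertion. For the rest, $R^i\pi_*\mathcal O_X(M)=0$ for $i>0$ is relative toric Kawamata--Viehweg vanishing (cf.~\cite{mustata}, or the case $Y=\emptyset$ of the same theorem), and the long exact sequence of $R^\bullet\pi_*$ for $0\to\mathcal I_Y\otimes\mathcal O_X(M)\to\mathcal O_X(M)\to\mathcal O_Y(M)\to 0$ then yields $R^i\pi_*\mathcal O_Y(M)=0$ for $i>0$ and the surjectivity of $\pi_*\mathcal O_X(M)\to\pi_*\mathcal O_Y(M)$.

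A more concrete alternative, parallel to the proof of Theorem \ref{531}, is to take a toric resolution $f\colon V\to X$ with $K_V+E=f^*(K_X+D)$ and $\Supp(f^{-1}(Y))$ simple normal crossing, decompose $E=E_1+E_2$ with $E_1=\Supp(f^{-1}(Y))$, use $\mathcal I_Y\simeq f_*\mathcal O_V(-E_1)$ (the Claim in the proof of Theorem \ref{531}) together with $-E_1\sim K_V+E_2$, and apply a relative Koll\'ar-type vanishing for the $\pi$-nef and $\pi$-log big divisor $f^*M$ on the simple normal crossing pair $(V,E_2)$: the torsion-freeness and vanishing package for simple normal crossing pairs plays the role of the deep input, applied to $R^jf_*\mathcal O_V(K_V+E_2)$ and then pushed down by $\pi$.

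I expect the main obstacle to be the translation between the combinatorial hypothesis ``$M|_{V(\sigma)}$ is $\pi$-big for every $\sigma\in\Delta\setminus\Phi$'' and the precise ``nef and log big'' input required by \cite{fujino2}, and, in the resolution approach, the bookkeeping that each stratum of $(V,E_2)$ dominates some $V(\sigma)$ with $\sigma\notin\Phi$ and has the same dimension, so that $\pi$-bigness survives pullback by $f$. Matching lc and qlc centers with the orbit closures $V(\sigma)$ is supplied by Proposition \ref{411}; granting this and the deep results of \cite{fujino2}, everything else is formal.
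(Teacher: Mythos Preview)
Your proposal is correct, and your ``concrete alternative'' is exactly the route the paper takes: it passes to a toric resolution $f:V\to X$, splits $E=E_1+E_2$ with $E_1=\Supp(f^{-1}(Y))$, identifies $\mathcal I_Y\simeq f_*\mathcal O_V(-E_1)$, and then applies the vanishing package of \cite{fujino2}, \cite{fujino3} to $f^*M\sim f^*M-E_1-(K_V+E_2)$, using precisely the nef and log big translation you spell out. The one noteworthy difference is that the paper does not invoke the Claim from Theorem~\ref{531} for $\mathcal I_Y\simeq f_*\mathcal O_V(-E_1)$ (Section~\ref{sec4} is declared independent of the earlier sections); instead it re-derives this isomorphism from the torsion-freeness of $R^1f_*\mathcal O_V(K_V+E_2)$: every nonzero local section is supported on the $f$-image of some stratum of $(V,E_2)$, and all such images lie outside $Y$, so the connecting map $f_*\mathcal O_{E_1}\to R^1f_*\mathcal O_V(-E_1)$ vanishes. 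Your first, black-box quasi-log approach is also valid and is the same argument packaged one level higher.
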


\begin{proof}[Sketch of the proof] 
If $Y=X$, then there is nothing to prove. 
So, we can assume that $Y\subsetneq X$.  
Let $f:V\to X$ be a toric resolution 
such that $K_V+E=f^*(K_X+D)$ and 
that $\Supp (f^{-1}(Y))$ is a simple normal crossing 
divisor on $V$. 
We decompose $E=E_1+E_2$, where 
$E_1=\Supp (f^{-1}(Y))$ 
and $E_2=E-E_1$. 
We consider the short exact sequence 
$$
0\to \mathcal O_V(-E_1)\to \mathcal O_V\to 
\mathcal O_{E_1}\to 0. 
$$ 
Then we obtain the exact sequence 
$$0\to f_*\mathcal O_V(-E_1)\to \mathcal O_X\to f_*\mathcal 
O_{E_1}\to R^1f_*\mathcal O_V(-E_1)\to \cdots. 
$$ 
Since $-E_1\sim K_V+E_2$, $R^1f_*\mathcal O_V(-E_1)\simeq 
R^1f_*\mathcal O_V(K_V+E_2)$ and every 
non-zero local section of $R^1f_*\mathcal O_V(-E_1)$ contains 
in its support the $f$-image 
of some strata of $(V, E_2)$ (see, for example, 
\cite[Theorem 7.4]{ambro} or \cite[Theorem 3.13]{fujino3}). 
Note that $W$ is a stratum of $(V, E_2)$ if and only if 
$W$ is $V$ or a log canonical 
center of $(V, E_2)$. 
On the other hand, the support of $f_*\mathcal O_{E_1}$ is contained 
in $Y$. Therefore, the connecting homomorphism 
$f_*\mathcal O_{E_1}\to R^1f_*\mathcal O_V(-E_1)$ is a $0$-map. 
Thus, we obtain $$0\to f_*\mathcal O_V(-E_1)\to 
\mathcal O_X\to \mathcal O_Y\to 0$$ and 
$\mathcal I_Y\simeq f_*\mathcal O_V(-E_1)$. 
We consider $f^*M\sim f^*M-E_1-(K_V+E_2)$. 
By the vanishing theorem (see \cite{fujino2} and 
\cite[Theorem 3.13]{fujino3}), we 
obtain $R^i\pi_*(f_*\mathcal O_V(f^*M-E_1))\simeq 
R^i\pi_*(\mathcal I_Y\otimes \mathcal O_X(M))=0$ for any 
$i>0$. 
The other statements are obvious by exact sequences.  
\end{proof}

\begin{rem} 
In Theorem \ref{431}, 
by the Lefschetz principle, we can replace the 
base field $\mathbb C$ with a field $k$ of characteristic zero. 
I believe that 
Theorem \ref{431} 
holds true for toric varieties defined over 
a field $k$ of any characteristic. 
However, I did not check it. 
\end{rem}

\begin{rem}
In the proof of Theorem \ref{431}, 
we did not use the fact that $\pi:X\to S$ is {\em{toric}}. 
We just needed the properties in Proposition \ref{411}. 
\end{rem}

\begin{say}[Comments on Theorem \ref{422}]\label{hoso} 
We freely use the notation in the proof of Theorem \ref{431}. 
We assume that $Y\subsetneq X$. 
Then we have the following properties. 
\begin{enumerate}
\item $g^*0\sim K_{E_1}+{E_2}|_{E_1}$, 
where $g=f|_{E_1}:E_1\to Y$. 
\item ${E_2}|_{E_1}$ is reduced and $g_*\mathcal O_{E_1}\simeq 
\mathcal O_Y$. 
\item The collection of subvarieties $\{V(\sigma)\}_{\sigma \in \Phi}$ 
coincides with the image of torus invariant irreducible 
subvarieties of $V$ which are contained in $E_1$. 
\end{enumerate}
Therefore, $Y$ is a {\em{quasi-log variety}} 
with the {\em{quasi-log canonical class}} $0$ and the subvarieties 
$V(\sigma)$ for $\sigma \in \Phi$ are the {\em{qlc centers}} 
of $Y$. 
We sometimes call $g:(E_1, {E_2}|_{E_1})\to Y$ a 
{\em{quasi-log resolution}}. 
For the details, see \cite{fujino3}. 
\end{say}

\ifx\undefined\bysame
\newcommand{\bysame|{leavemode\hbox to3em{\hrulefill}\,}
\fi

\end{document}